\setlist[enumerate]{leftmargin=.5in}
\setlist[itemize]{leftmargin=.5in}
\newcommand{\TheTitle}{ Optimization with Equality and Inequality Constraints Using Parameter Continuation} 
\newcommand{\TheAuthors}{M. Li, H. Dankowicz}
\crefname{hypothesis}{Hypothesis}{Hypotheses}
\title{{\TheTitle}\thanks{This paper was submitted on August 16, 2018}} 
\author{
  Mingwu Li\thanks{Department of Mechanical Science and Engineering, University of Illinois at Urbana-Champaign, Urbana, IL  
    (\email{mingwul2@illinois.edu}, \email{danko@illinois.edu}).}
  \and
  Harry Dankowicz\footnotemark[2]
}
\DeclareMathOperator{\sgn}{sgn}
\begin{document}

\maketitle

\begin{abstract}
We generalize the successive continuation paradigm introduced by Kern{\'e}vez and Doedel~\cite{DKK91} for locating locally optimal solutions of constrained optimization problems to the case of simultaneous equality and inequality constraints. The analysis shows that potential optima may be found at the end of a sequence of easily-initialized separate stages of continuation, without the need to seed the first stage of continuation with nonzero values for the corresponding Lagrange multipliers. A key enabler of the proposed generalization is the use of complementarity functions to define relaxed complementary conditions, followed by the use of continuation to arrive at the limit required by the Karush-Kuhn-Tucker theory. As a result, a successful search for optima is found to be possible also from an infeasible initial solution guess. The discussion shows that the proposed paradigm is compatible with the staged construction approach of the \textsc{coco} software package. This is evidenced by a modified form of the \textsc{coco} core used to produce the numerical results reported here. These illustrate the efficacy of the continuation approach in locating stationary solutions of an objective function along families of two-point boundary value problems and in optimal control problems.
\end{abstract}

\begin{keywords}
constrained optimization, feasible solutions, complementarity conditions, boundary-value problems, periodic orbits, optimal control, successive continuation, software implementation
\end{keywords}

\begin{AMS}
  49K15, 49K27, 49M05, 49M29, 90C33, 45J05, 34B99
\end{AMS}

\section{Introduction}
\label{sec:intro}



Parameter continuation techniques enable a global study of smooth manifolds of solutions to underdetermined systems of equations. It stands to reason that they should also be useful for optimization problems constrained to such manifolds. Starting from a single solution and local information about the governing equations, such techniques generate a successively expanding, suitably dense cover of solutions, among which optima may be sought. Suggestive examples include optimization along families of periodic or quasiperiodic solutions of nonlinear dynamical systems or optimal control problems with end-point constraints.

As shown first by Kern{\'e}vez and Doedel~\cite{DKK91,Doedel-ii}, parameter continuation techniques may be effectively deployed as a core element of a search strategy for optima along a constraint manifold. This is accomplished by seeking simultaneous solutions to the original set of equations and a set of additional adjoint conditions, linear and homogeneous in a set of unknown Lagrange multipliers. The technique introduced in~\cite{DKK91} demonstrates how local optima may be found at the terminal points of sequences of continuation runs, with each successive run initialized by the final solution found in the previous run. Remarkably, by the linearity and homogeneity of the adjoint conditions, the initial runs may be conveniently initialized with vanishing Lagrange multipliers from solutions to the original set of equations. Kern{\'e}vez and Doedel's technique thereby overcomes the difficulty of determining values for the original unknowns \textit{and the Lagrange multipliers} that provide an adequate initial solution guess to the necessary conditions for local optima.

The objective of this paper is to extend Kern{\'e}vez and Doedel's technique to optimization problems with simultaneous equality and inequality constraints. This {nontrivial generalization} is here accomplished by seeking simultaneous solutions to the original set of equations, additional adjoint conditions that are again linear and homogeneous in a set of unknown Lagrange multipliers, and relaxed nonlinear complementarity conditions in terms of the inequality functions and the corresponding Lagrange multipliers. As before, local optima are found at the terminal points of sequences of consecutive continuation runs with linearity and homogeneity again enabling initialization with vanishing Lagrange multipliers. Notably, here, certain stages of continuation are used to drive the relaxation parameters to zero in order to ensure that the necessary nonlinear complementarity conditions are exactly satisfied.

In the absence of inequality constraints, Kern{\'e}vez and Doedel's technique relies on the existence of a branch point for the initial continuation problem from which emanate two one-dimensional branches of solutions with vanishing and non-vanishing Lagrange multipliers, respectively. As shown here, such a branch point may also be found in the presence of inequality constraints. Interestingly, inequality constraints afford additional opportunities for generating solutions with non-vanishing Lagrange multipliers from an initial solution with all zero multipliers. Remarkably, in the presence of inequality constraints, the successive continuation technique may even benefit from initialization on solutions that violate these constraints. {An original contribution of this manuscript is the formulation and proof of several key lemmas that establish these properties for large classes of optimization problems.}

Example applications of Kern{\'e}vez and Doedel's technique can be found in \cite{bio,bio1,acoustic1,acoustic}. In each case, the governing set of equations, including the adjoint conditions, forms a two-point boundary-value problem that is analyzed using the software package \textsc{auto}~\cite{auto}. {Such an implementation is also possible for the extension to the presence of inequality constraints, as nothing in the formulation relies on a particular software implementation. Nevertheless}, recent work by the present authors~\cite{staged_adjoint} demonstrates the implementation of a staged construction approach for the adjoint equations in the \textsc{matlab}-based software platform \textsc{coco}~\cite{coco}, supporting the assembly of the full continuation problem from partial problems with predefined structure and adjoints. A powerful example of this functionality is the optimal design of a transfer trajectory between two halo orbits near a libration point of the circular restricted three-body problem~\cite{staged_adjoint}. In this paper, we use a further extension of \textsc{coco} to locate stationary points of an algebraic-integral objective functional constrained by a two-point boundary-value problem and an integral inequality, as well as for an optimal control problem under integral inequality constraints.

We finally note a further use of continuation methods when seeking solutions to singular, non-smooth, or non-differentiable optimization problems as limits of continuous families of regularized optimization problems. Such applications will not be considered here, but include regularized optimal control problems with differentiable solutions that approach discontinuous bang-bang control solutions in the singular limit~\cite{bang-bang-1,bang-bang-2}, as well as regularized optimal control problems with index-1 differential-algebraic constraints that converge to higher-index constraints in the singular limit~\cite{fabien2014indirect}. Such regularizations can, in principle, be combined with the techniques described in this paper, provided that the singular limits can be reached in the corresponding stages of continuation.

The remainder of this paper is organized as follows. In \cref{sec:prob-stat}, we formulate the first-order necessary conditions for local optima of an objective function in the presence of equality and real-valued inequality constraints on a general Banach space. We describe the use of a nonsmooth complementarity function to enforce the complementary slackness conditions on the inequality functions and the corresponding Lagrange multipliers. \Cref{sec:example} presents the application of the successive continuation technique {to} a finite-dimensional optimization problem {that motivates the subsequent theoretical development. The detailed analysis illustrates the additional flexibility afforded by the presence of inequality constraints and lays the foundation for an algorithmic implementation in numerical software.} The generalization to the infinite-dimensional context is discussed in \cref{sec:cont}, with reference to several {original} key lemmas that are proved in \cref{sec:essential lemmas}. After presenting some implementation details in \cref{sec:implementation} {that are particular to the advantages afforded by the staged construction paradigm of \textsc{coco}}, we consider additional examples in \cref{sec:application} to demonstrate the effectiveness of the proposed {optimization} approach. A brief summary and several directions for future research are considered in the concluding \cref{sec:conclusions}.

\section{Problem statement}
\label{sec:prob-stat}
Consider the problem of finding a locally unique pair $(\hat{u},\hat{\mu})$ that is a stationary point of the function $(u,\mu)\mapsto\mu_1$ under the equality constraints $F(u,\mu)=0$ and inequality constraints $G(u)\leq0$, where
\begin{equation}
F(u,\mu)\mapsto\begin{pmatrix}\Phi(u)\\\Psi(u)-\mu\end{pmatrix}.
\end{equation}
Here, $\Phi:U\rightarrow Y$, $\Psi:U\rightarrow\mathbb{R}^l$ and $G:U\rightarrow\mathbb{R}^q$ are continuously Fr\'{e}chet differentiable mappings, and $U$ and $Y$ are real Banach spaces with duals $U^*$ and $Y^*$. We refer to the set $\{u\in U:G(u)\leq 0\}$ as the feasible region and to its complement as the infeasible region.

Let $\mathbb{A}:=\{i:G_i(\hat{u})=0\}$ denote the set of indices of active inequality constraints evaluated at $\hat{u}$, and suppose that the range of $(D\Phi(\hat{u}),DG_{\mathbb{A}}(\hat{u}))$ equals $ Y\times \mathbb{R}^{\left |\mathbb{A}\right |}$. It follows from Corollary 9.4 in~\cite{ben1982unified} that there exist unique $\hat{\lambda}\in Y^*$, $\hat{\eta}\in\mathbb{R}^l$, and $\hat{\sigma}\in\mathbb{R}^q$ that satisfy the generalized Karush-Kuhn-Tucker (KKT) optimality conditions
\begin{equation}
\Phi(\hat{u})=0,\quad\Psi(\hat{u})-\hat{\mu}=0,
\end{equation}
\begin{equation}
(D\Phi(\hat{u}))^\ast\hat{\lambda}+(D\Psi(\hat{u}))^\ast\hat{\eta}+(DG(\hat{u}))^\ast\hat{\sigma}=0,\quad\hat{\eta}_1=1,\quad\hat{\eta}_{\{2,\ldots,l\}}=0,
\end{equation}
and
\begin{equation}
\label{eq:ncp}
\hat{\sigma}_i\geq0, \quad -G_i(\hat{u})\geq0,\quad \hat{\sigma}_iG_i(\hat{u})=0,\quad 1\leq i\leq q,
\end{equation}
where $(D\Phi(u))^\ast:Y^*\rightarrow U^*$, $(D\Psi(u))^\ast:\mathbb{R}^l\rightarrow U^*$ and $(DG(u))^\ast:\mathbb{R}^q\rightarrow U^*$ are the adjoints of the Fr\'{e}chet derivatives $D\Phi(u)$, $D\Psi(u)$  and $DG(u)$, respectively.

Inspired by the general theory of nonlinear complementarity problems~\cite{comp,cp_review,ncp}, we find it convenient to convert \eqref{eq:ncp} into a set of nonlinear equations. Specifically, let $\chi:\mathbb{R}\times\mathbb{R}\to\mathbb{R}$ be a function that satisfies the conditions 
\begin{equation}
    \chi(a,b)=0 \iff a,b\geq0,\quad ab=0.
\end{equation}
Then \eqref{eq:ncp} is equivalent to the condition
\begin{equation}
\label{eq: equiv_ncp}
  K(\hat{\sigma},-G(\hat{u})):=\begin{pmatrix}\chi(\hat{\sigma}_1,-G_1(\hat{u})) & \ldots & \chi(\hat{\sigma}_q,-G_q(\hat{u}))\end{pmatrix}^\top =0.
\end{equation}

In this paper, we let $\chi$ equal the Fischer-Burmeister function
\begin{equation}
    \chi(a,b) = \sqrt{a^2+b^2}-a-b,
\end{equation}
whose contour plot is shown in Fig.~\ref{fig:ncp}. In particular, for $\kappa>0$,
\begin{equation}\label{chiform}
\chi(a,b)=\kappa\quad\Rightarrow\quad a=-\frac{\kappa(2b+\kappa)}{2(b+\kappa)},\quad b>-\kappa.
\end{equation}
As long as $(a,b)\ne (0,0)$,
\begin{equation}
\label{eq:phi_ab}
    \chi_a(a,b) :=\frac{\partial \chi}{\partial a}(a,b) = \frac{a}{\sqrt{a^2+b^2}}-1,\quad \chi_b(a,b) :=\frac{\partial \chi}{\partial b}(a,b) = \frac{b}{\sqrt{a^2+b^2}}-1,
\end{equation}
from which we conclude that
\begin{equation}
\label{eq:derivative-b}
    \chi_a(0,b)=-1,\quad \chi_b(0,b) = \sgn(b)-1,
\end{equation}
for $b\ne 0$. In particular, $\chi_b(0,b)=0$ when $b>0$. The function $\chi$ is clearly singular at $(0,0)$.
\begin{figure}[ht!]
\centering
\includegraphics[width=0.65\textwidth]{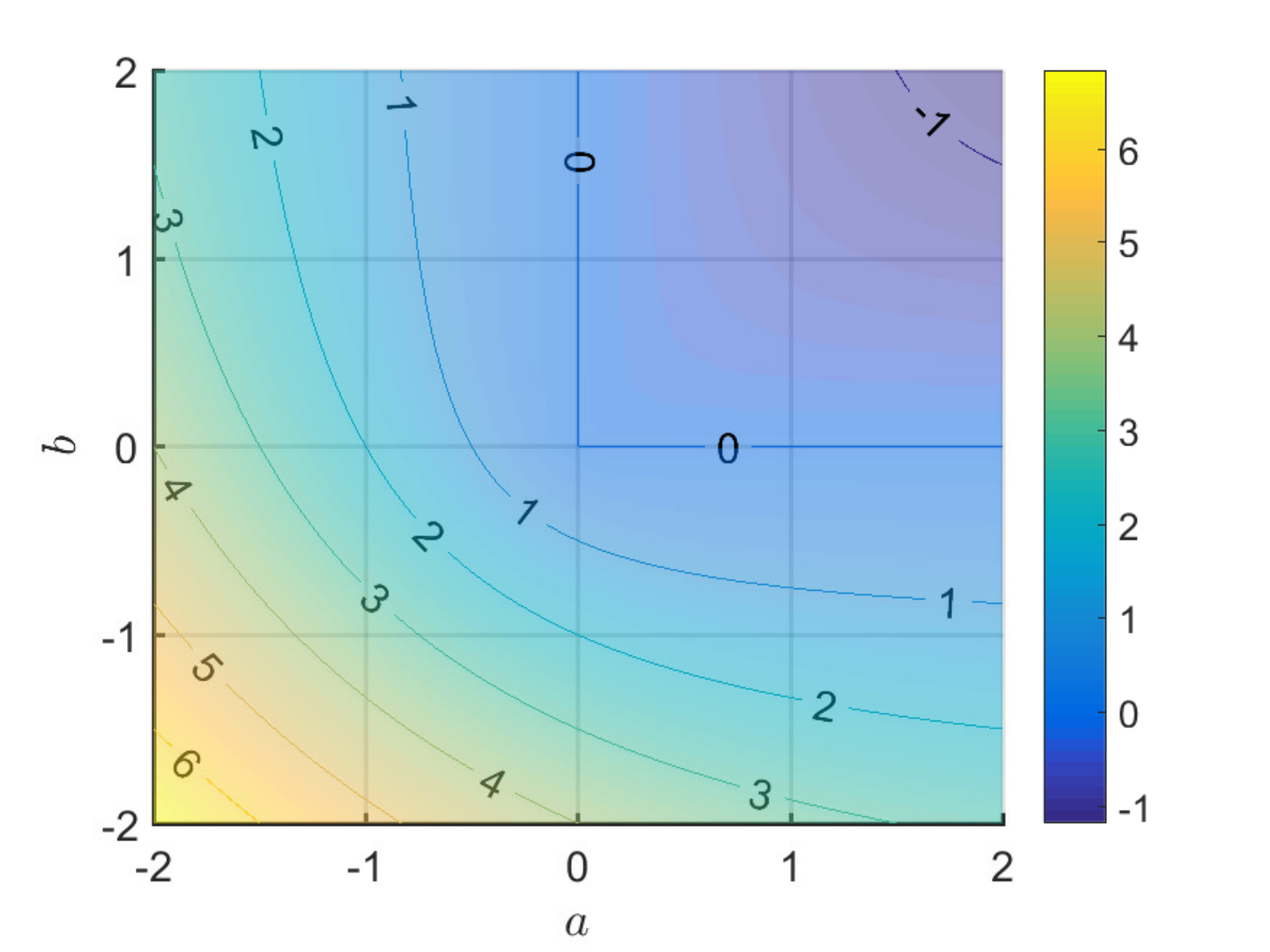}
\caption{Contour plot of the Fischer-Burmeister function $\chi(a,b)=\sqrt{a^2+b^2}-a-b$.}
\label{fig:ncp}
\end{figure}

\section{Motivating example}\label{sec:example}

Consider the problem of locating a local minimum of the function $u:=(x,y)\mapsto \Psi_1(u):=(x-2)^2+2(y-1)^2$ subject to the inequalities
\begin{equation}
G_1(u):=x+4y-3\leq0,\quad G_2(u):=-x+y\leq0.
\end{equation}
In the notation of the previous section, $U=\mathbb{R}^2$, $\mathbb{Y}=\emptyset$, $l=1$, and $q=2$. By the Karush-Kuhn-Tucker conditions, there exist unique non-negative scalars $\hat{\sigma}_1$ and $\hat{\sigma}_2$ such that
\begin{gather}
2\begin{pmatrix}\hat{x}-2\\2\hat{y}-2\end{pmatrix}+\begin{pmatrix}1\\4\end{pmatrix}\hat{\sigma}_1+\begin{pmatrix}-1\\1\end{pmatrix}\hat{\sigma}_2=0,\quad
\hat{\sigma}_1(\hat{x}+4\hat{y}-3)=0,\quad
\hat{\sigma}_2( -\hat{x}+\hat{y})=0.\label{KKT1}
\end{gather}
It follows that a candidate stationary point in the feasible region is located at $\hat{u}=\left(5/3,1/3\right)$, since
\begin{itemize}
\item $\hat{\sigma}_1=\hat{\sigma}_2=0$ only if $\hat{u}=(2,1)$, but $G_1(2,1)=3\nleq0$;
\item  $\hat{\sigma}_1=0,\hat{\sigma}_2\neq0$ only if $\hat{u}=\left(4/3,4/3\right)$ and $\hat{\sigma}_2=-4/3\ngeq 0$;
\item $\hat{\sigma}_1\neq0,\hat{\sigma}_2=0$ only if $\hat{u}=\left(5/3,1/3\right)$  and $\hat{\sigma}_1=2/3\geq 0$;
\item $\hat{\sigma}_1\neq0,\hat{\sigma}_2\neq0$ only if $\hat{u}=\left(3/5,3/5\right)$, $\hat{\sigma}_1=22/25$, and $\hat{\sigma}_2=-48/25\ngeq 0$.
\end{itemize}
This point lies on the boundary of the feasible region defined by $G_1=0$ and $G_2<0$, along which $\Psi_1$ evaluates to $3(6y^2-4y+1)$ and attains a minimum at $\hat{y}=1/3$. Moreover, for $\epsilon\ll 1$, $\Psi_1(5/3+\epsilon v,1/3+\epsilon w)\approx 1-2G_1(5/3+\epsilon v,1/3+\epsilon w)/3$. We conclude that $\hat{u}$ is a unique local minimum of $\Psi_1$ in the feasible region.

We illustrate next a method for locating the minimum at $\hat{u}$ using a successive continuation approach that connects an initial point $u_0$ to $\hat{u}$ via a sequence of intersecting one-dimensional manifolds. To this end, consider the following four regions: $U_{+/-}=\{u\in\mathbb{R}^2:G_1(u)>0, G_2(u)<0\}$, $U_{-/+}=\{u\in\mathbb{R}^2:G_1(u)<0, G_2(u)>0\}$,  $U_{+/+}=\{u\in\mathbb{R}^2:G_1(u)>0, G_2(u)>0\}$, and $U_{-/-}=\{u\in\mathbb{R}^2:G_1(u)<0, G_2(u)<0\}$. It follows that $U_{+/-}$, $U_{-/+}$, and $U_{+/+}$ are open subsets of the infeasible region, while $U_{-/-}$ is an open subset of the feasible region.

Suppose that $u_0\in U_{+/-}$ and let $\kappa_{0,1}:=\chi\left(0,-G_1(u_0)\right)=2G_1(u_0)>0$. It follows that $u_0$ lies on a locally unique one-dimensional solution manifold of the equation
\begin{equation}\label{exred}
\chi\left(0,-G_1(u)\right)-\kappa_{0,1}=0\quad\Longleftrightarrow\quad G_1(u)=G_1(u_0).
\end{equation}
The point
\begin{equation}
u_1:=\left( \frac{15+G_1(u_0)}{9}, \frac{3+2G_1(u_0)}{9}\right)
\end{equation}
is a stationary point of $\Psi_1$ on this manifold and $G_2<0$ along the entire segment of the manifold between $u_0$ and $u_1$ provided that $G_1(u_0)<12$.

Consider next the system of equations
\begin{gather}\label{exrest}
\left\{\begin{array}{c}
\Psi_1(u)-\mu_1=0,\\
\left(D\Psi_1(u)\right)^\top\eta_1+\left(DG(u)\right)^\top\sigma=0,\\
\chi\left(\sigma_1,-G_1(u)\right)-\kappa_1=0,\\
\chi\left(\sigma_2,-G_2(u)\right)-\kappa_2=0,\end{array}\right.
\end{gather}
with $\kappa_1=\kappa_{0,1}$, $\kappa_2=0$, and unknowns $(u,\mu_1,\eta_1,\sigma_1,\sigma_2)$. Then, every solution $u\in U_{+/-}$ of \eqref{exred} corresponds to a solution $(u,\Psi_1(u),0,0,0)$ of \eqref{exrest}. Indeed, for every such point with $u\ne u_1$, the corresponding Jacobian is found to have full rank. Thus, provided that $u_0\ne u_1$, the one-dimensional solution manifold of \eqref{exred} through $u_0$ corresponds to a locally unique one-dimensional solution manifold of \eqref{exrest} through $(u_0,\Psi_1(u_0),0,0,0)$.

Since $u_1$ is a stationary point of $\Psi_1$ along a level curve of $G_1$, the matrices $D\Psi_1(u_1)$ and $DG_1(u_1)$ are linearly dependent. It follows that if $G_1(u_0)<12$ then $(u_1,\Psi_1(u_1),0,0,0)$ is a branch point of \eqref{exrest} through which passes a secondary one-dimensional solution manifold, locally parameterized by $\eta$, along which $\sigma_2=0$ and the matrices $D\Psi_1(u)$ and $DG_1(u)$ are linearly dependent. Substitution yields $y=2x-3$ and $\sigma_1=2(2-x)\eta$, where $x$ is implicitly defined by
\begin{equation}
\chi\left(2(2-x)\eta,15-9x\right)-\kappa_{0,1}=0.
\end{equation}
Since $G_1(u_0)<12$, it follow from \eqref{chiform} that $G_2<0$ along this manifold for $\eta\in[0,1]$. Denote the corresponding $u$ for $\eta=1$ by $u_2$. Then, $(u_2,\Psi_1(u_2), 2(2-x_2),0,\kappa_{0,1})$ is a solution to \eqref{exrest} with $\eta=1$, $\kappa_2=0$, and unknowns $(u,\mu_1,\sigma_1,\sigma_2,\kappa_1)$.  Indeed, this point lies on a locally unique one-dimensional manifold of such solutions, along which $\sigma_2=0$, $y=2x-3$, $\sigma_1=2(2-x)$, where $x$ is implicitly defined by
\begin{equation}
\chi\left(2(2-x),15-9x\right)-\kappa_1=0.
\end{equation}
Since $G_1(u_0)<12$, it again follows from \eqref{chiform} that $G_2<0$ along this manifold for $\kappa_1\in[0,\kappa_{0,1}]$. The corresponding $u$ for $\kappa_1=0$ then equals $\hat{u}$, as expected.

Numerical results using parameter continuation with the \textsc{coco} software package validate the above analysis.
With $u_0=(3,1)\in U_{+/-}$, we have $G_1(u_0)=4$ and $\kappa_{0,1}=8$. Continuation along the solution manifold to \eqref{exrest} with $\kappa_1=\kappa_{0,1}$ and $\kappa_2=0$ results in a curve along which a local minimum of $\Psi_1$ is detected at $(x,y,\mu_1,\eta_1,\sigma_1,\sigma_2)=(2.1111, 1.2222,0.1111,0,0,0)$ represented by the red dots in~\cref{fig:cont-path-L1}. Branch switching at the stationary point results in the secondary branch terminating at the point $(2.0997, 1.1995, 0.0895, 1.0000,  -0.1995, 0)$ represented by the blue dots in~\cref{fig:cont-path-L1}. Finally, continuation along the solution manifold to \eqref{exrest} with $\eta=1$ and $\kappa_2=0$ yields the curves in~\cref{fig:cont-path-L1} connecting the blue dots to the black dots at $(x,y,\mu_1,\sigma_1,\sigma_2,\kappa_1)=(1.6667,0.3333,1.0000,0.6667,0,0)$, consistent with the analytical solution.

\begin{figure}[h]
\centering
\includegraphics[width=3.0in]{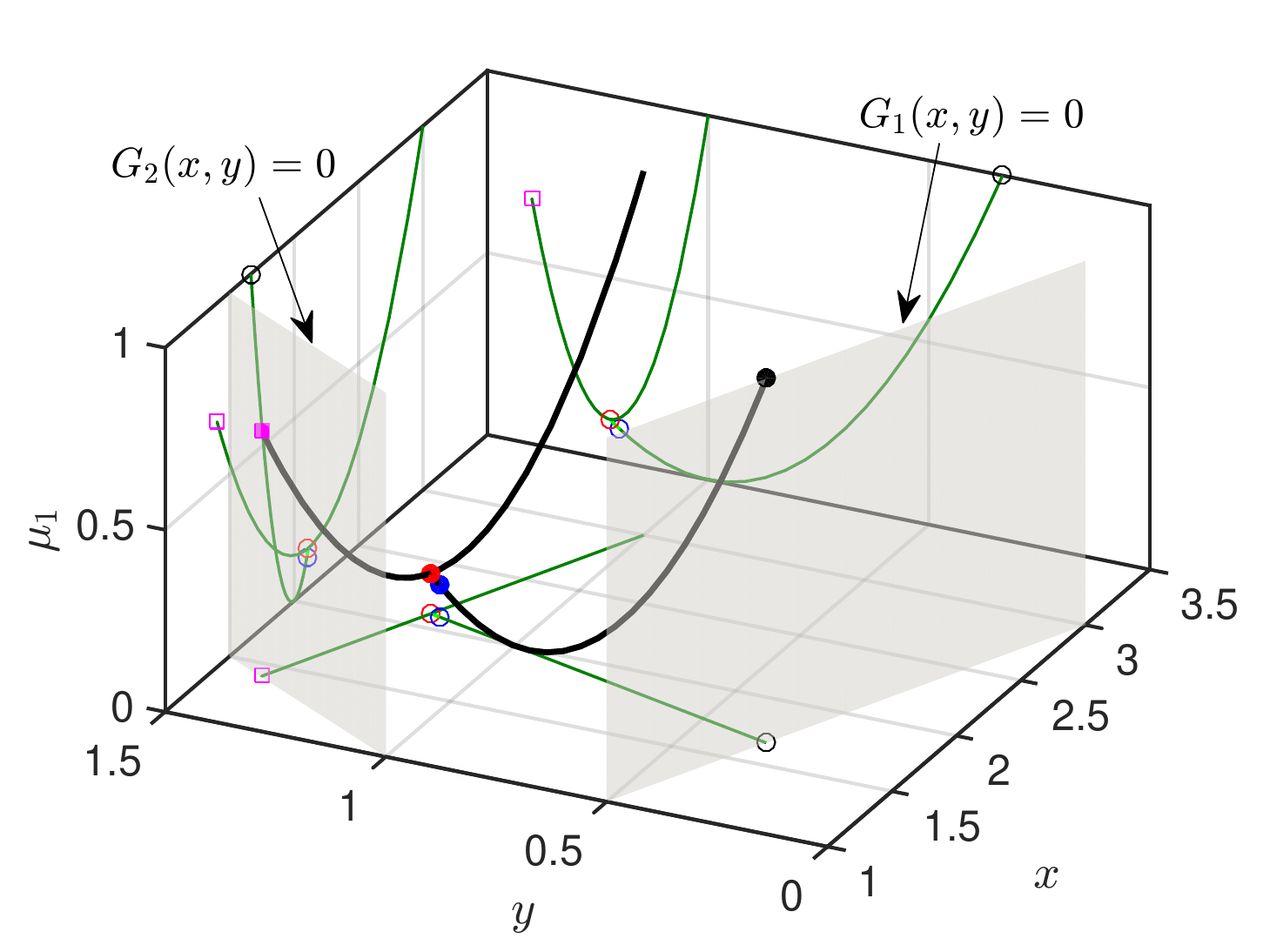}
\includegraphics[width=3.0in]{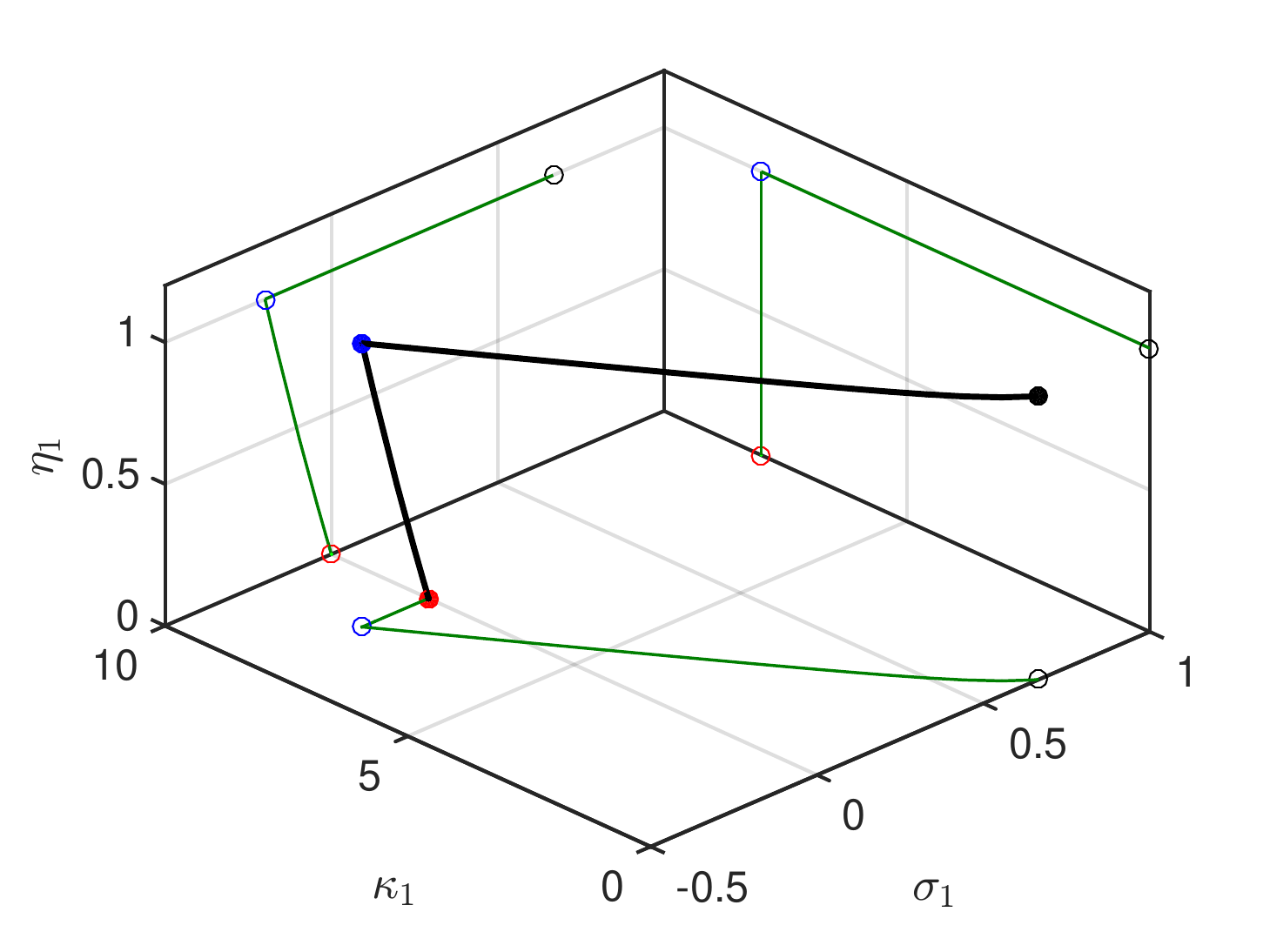}
\caption{Projections of continuation paths associated with a {successive} search for stationary solutions. {Here, dark green thin lines and hollow markers are used to denote projections of black thick lines and filled markers in three-dimensional space onto the three coordinate planes. Gray planes are used to represent tight constraints.} Starting from $u_0=(3,1)$ and holding $\kappa_1$ and $\kappa_2$ fixed at $8$ and $0$, respectively, a fold point in $\mu_1$, denoted by the red dots, is detected along the first solution manifold in the $\eta_1=\sigma_1=\sigma_2=0$ subspace. Along the secondary branch, blue dots denote locations where $\eta_1=1$. With fixed $\eta_1$, the terminal points (black dots) on the tertiary manifolds denote the stationary points where $\kappa_{1}=0$. The magenta square in the left panel corresponds to a singular point for the initial stage of continuation.}
\label{fig:cont-path-L1}
\end{figure}

Notably, the initial continuation from $u_0$ terminates with a failure of the Newton solver to converge at the singular point $(x,y,\mu_1,\eta_1,\sigma_1,\sigma_2)=(7/5, 7/5,17/25,0,0,0)$ on $G_2=0$ represented by the magenta-colored dot in the left panel. It is easy to show that a second branch of solutions, confined to the $G_2=0$ surface and with varying $\sigma_1$ and $\sigma_2$, also terminates at this point. This branch extends in a direction oppositely aligned (angle greater than $90^\circ$) to the direction along which continuation arrives at the singular point, thereby explaining the failure of the pseudo-arclength algorithm to bypass the singular point {(cf.~the right panel of~\cref{fig:arclength})}. Moreover, since $\eta$ decreases from $0$ as $\sigma_2$ increases from $0$ along this secondary branch, it cannot be used to reach a point with $\eta=1$.

\begin{figure}[ht!]
\centering
\includegraphics[width=0.85\textwidth]{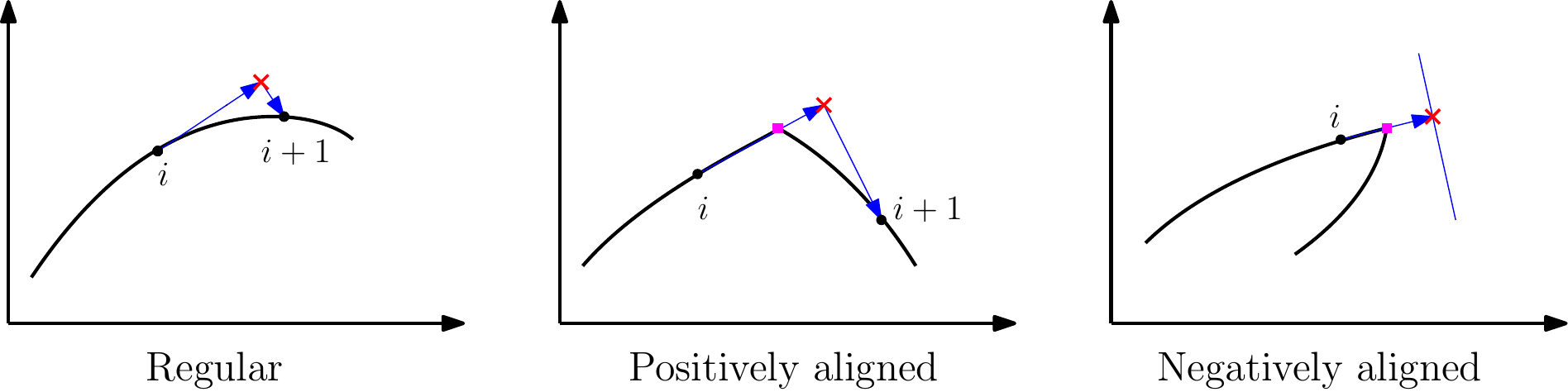}
\caption{{Illustration of the pseudo-arclength algorithm for one-dimensional continuation. Starting from a point $i$ located on a solution manifold, the next point $i+1$ is obtained by two steps in the algorithm. In the first step, a predictor denoted by a red cross is generated along the tangent direction at point $i$. A projection condition is then applied in the second step to locate the point $i+1$ on the manifold. Singular points are denoted by magenta squares. In the left panel, no singular point is encountered. In the middle panel, the extending directions of two branches terminating on the singular point are positively aligned and the algorithm may be able to bypass such a singular point. By contrast, in the right panel, the secondary branch extends in a direction oppositely aligned to the first branch, and no local solution exists to the projection condition. The algorithm is not able to bypass the singular point in this case.}}
\label{fig:arclength}
\end{figure}

As an alternative, suppose that $u_0\in U_{-/+}$ and let $\kappa_{0,2}=\chi(0,-G_2({u_0}))=2G_2(u_0)>0$. It follows that $u_0$ lies on a one-dimensional solution manifold of the equation
\begin{equation}\label{exred2}
\chi(0,-G_2({u}))-\kappa_{0,2}=0\quad\Longleftrightarrow\quad G_2(u)=G_2(u_0).
\end{equation}
The point
\begin{equation}
u_1=\left( \frac{4-2G_2(u_0)}{3}, \frac{4+G_2(u_0)}{3}\right)
\end{equation}
is a stationary point of $\Psi_1$ on this manifold, but $G_1(u_1)>0$. The segment of the manifold between $u_0$ and $u_1$ intersects the $G_1=0$ surface at
\begin{equation}
u_2=\left(\frac{3-4G_2(u_0)}{5},\frac{3+G_2(u_0)}{5}\right).
\end{equation}

We again considering the system of equations in \eqref{exrest}, this time with $\kappa_1=0$, $\kappa_2=\kappa_{0,2}$, and unknowns $(u,\mu_1,\eta_1,\sigma_1,\sigma_2)$. As before, the one-dimensional solution manifold of \eqref{exred2} through $u_0$ corresponds to a locally unique one-dimensional solution manifold of \eqref{exrest} through $(u_0,\Psi_1(u_0),0,0,0)$. Rather than reaching the stationary point of $\Psi_1$, this manifold terminates at $u_2$, which is a singular point for the corresponding Jacobian. Interestingly, a secondary branch, locally parameterized by $\eta$ and along which $G_1\equiv 0$ and $\sigma_1,\sigma_2\ne 0$, also terminates at this point. Substitution yields $x=3-4y$, $\sigma_1=2(1+2y)\eta/5$, and $\sigma_2=12(1-3y)\eta/5$, where $y$ is implicitly defined by
\begin{equation}
\chi(12(1-3y)\eta/5,3-5y)-\kappa_{0,2}=0.
\end{equation}
It follows from \eqref{chiform} that $\sigma_1>0$ along this manifold for $\eta\in(0,1]$. Denote the corresponding $u$ for $\eta=1$ by $u_3$.  Then $(u_3,\Psi_1(u_3), 2(1+2y_3)/5,12(1-3y_3)/5,\kappa_{0,2})$ is a solution to \eqref{exrest} with $\eta=1$, $\kappa_1=0$, and unknowns $(u,\mu_1,\sigma_1,\sigma_2,\kappa_2)$.  Indeed, this point lies on a locally unique one-dimensional manifold of such solutions on $G_1=0$, along which $x=3-4y$, $\sigma_1=2(1+2y)/5$, and $\sigma_2=12(1-3y)/5$, where $y$ is implicitly defined by
\begin{equation}
\label{exrest20}
\chi(12(1-3y)/5,3-5y)-\kappa_2=0.
\end{equation}
It is again easy to show from \eqref{chiform} that $\sigma_1>0$ along this manifold for $\kappa_2\in[0,\kappa_{0,2}]$. The corresponding $u$ for $\kappa_2=0$ then equals $\hat{u}$, as expected.

Continuation results are again consistent with the above analysis. Starting from $u_0=(-4,0)\in U_{-/+}$, we have $G_2(u_0)=4$ and $\kappa_{0,2}=8$. Continuation along the solution manifold to \eqref{exrest} with $\kappa_1=0$ and $\kappa_2=\kappa_{0,2}$ results in a curve that appears to terminate at $(x,y,\mu_1,\eta_1,\sigma_1,\sigma_2)=(-2.6000, 1.4000,21.4800,0,0,0)$ represented by the magenta dots in~\cref{fig:cont-path-L2}. As it happens, the pseudo-arclength algorithm manages to cross this point {(cf.~the middle panel in \cref{fig:arclength})} and continuation proceeds along the secondary branch in the $G_1=0$ surface, terminating at the point $(x,y,\mu_1,\eta_1,\sigma_1,\sigma_2)=(-0.2262, 0.8065, 5.0308,1,  1.0452, -3.4071)$ represented by the blue dots in~\cref{fig:cont-path-L2}. Finally, continuation along the solution manifold to \eqref{exrest} with $\eta=1$ and $\kappa_1=0$ yields the curves in~\cref{fig:cont-path-L2} connecting the blue dots to the black dots at $(x,y,\mu_1,\sigma_1,\sigma_2,\kappa_2)=(1.6667,0.3333,1.0000,0.6667,0,0)$, consistent with the analytical solution.

\begin{figure}[h]
\centering
\includegraphics[width=3.0in]{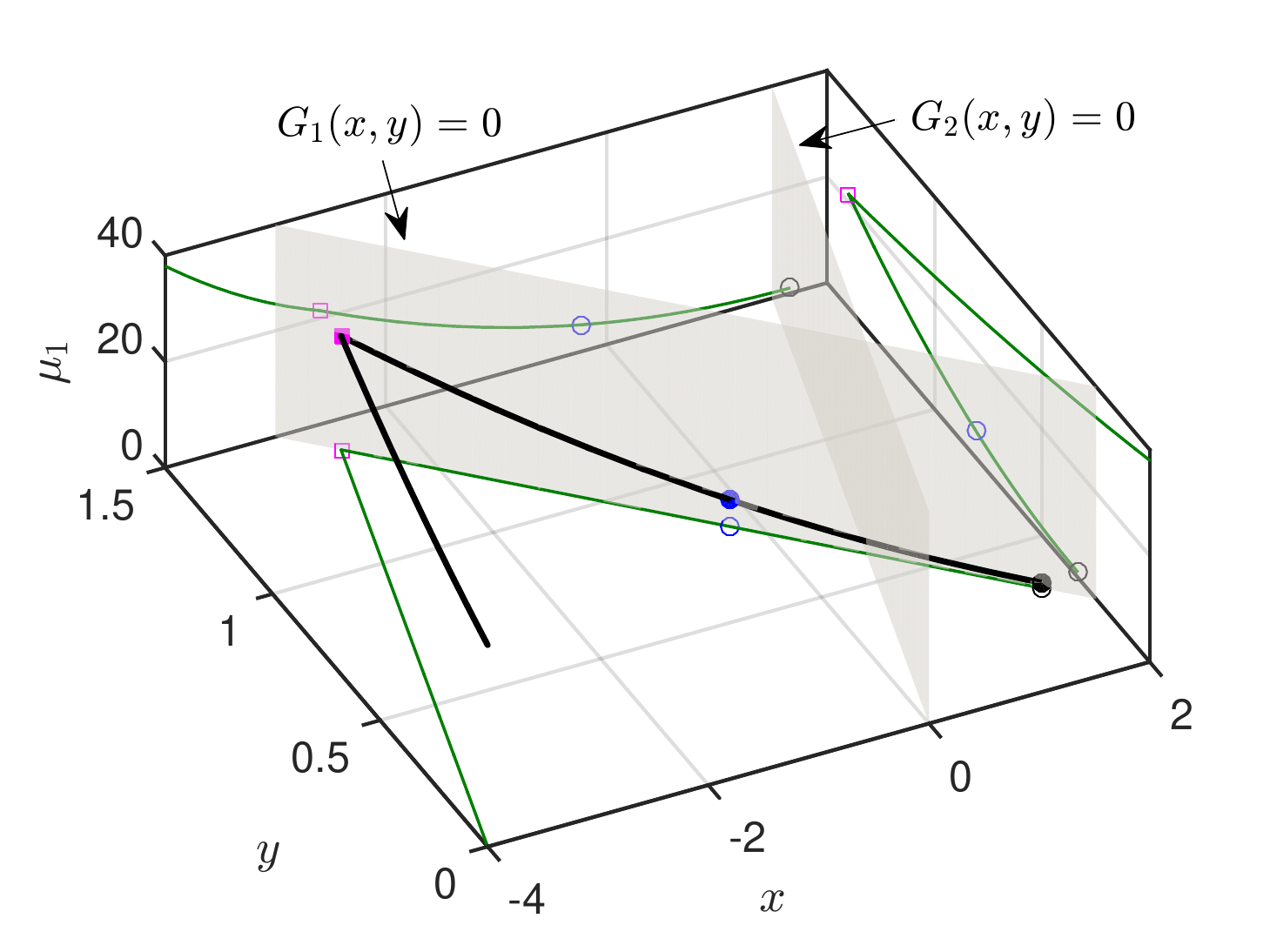}
\includegraphics[width=3.0in]{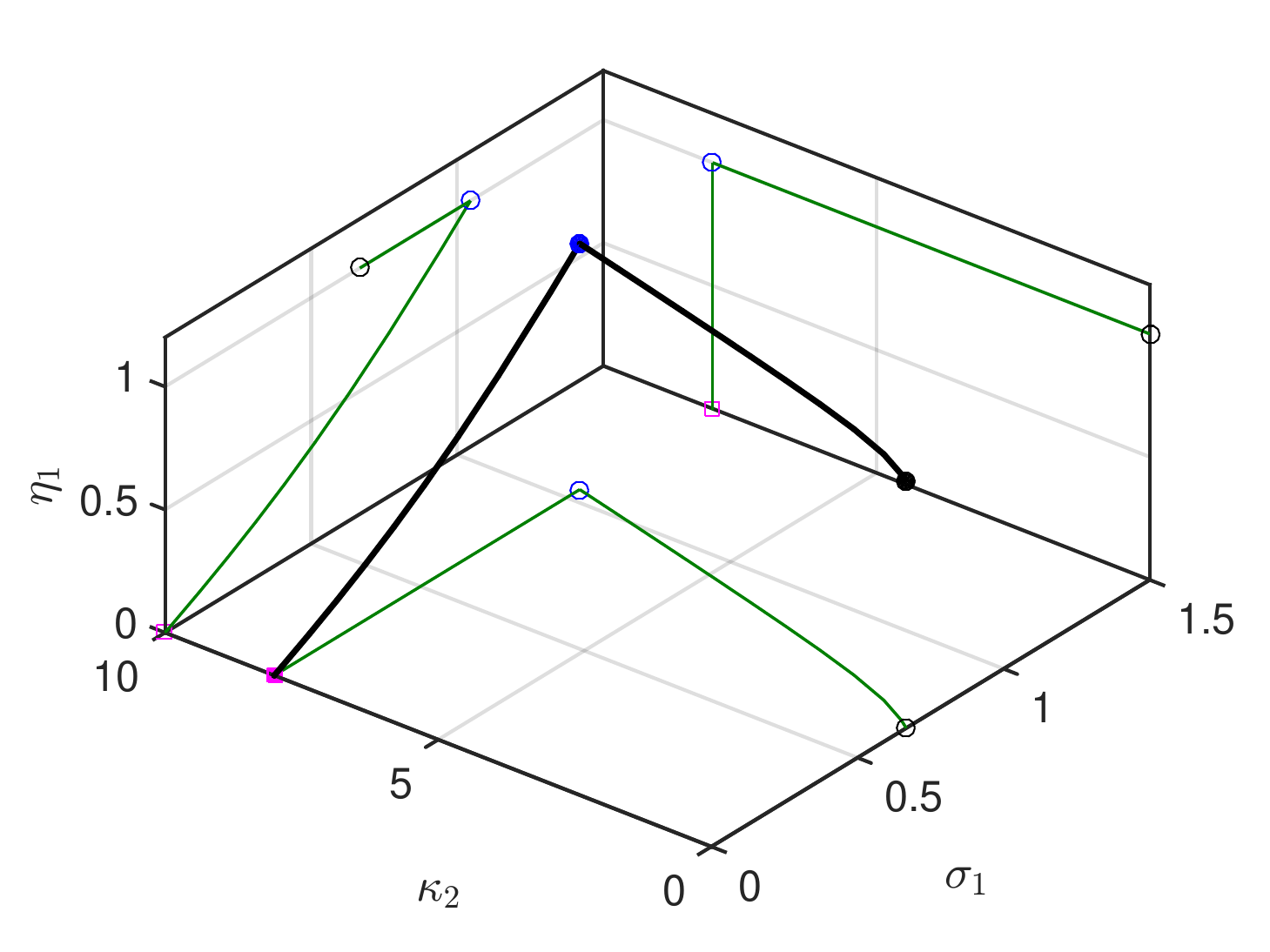}
\caption{Projections of continuation paths  associated with a {successive} search for stationary solutions. {Here, dark green thin lines and hollow markers are used to denote projections of black thick lines and filled markers in three-dimensional space onto the three coordinate planes. Gray planes are used to represent tight constraints.} Starting from $u_0=(-4,0)$ and holding $\kappa_1$ and $\kappa_2$ fixed at $0$ and $8$, respectively, continuation first proceeds along a  solution manifold in the $\eta_1=\sigma_1=\sigma_2=0$ subspace. Rather than terminating at a singular point (magenta squares) on the $G_1=0$ surface, the pseudo-arclength continuation algorithm bypasses this point and switches to a secondary branch in the $G_1=0$ surface along which $\eta_1$, $\sigma_1$, and $\sigma_2$ vary. The first run terminates at the point corresponding to $\eta_1=1$ (blue dots).  In the second run, with fixed $\eta_1$, the terminal points (black dots) on the second manifolds denote the stationary points where $\kappa_{2}=0$.}
\label{fig:cont-path-L2}
\end{figure}

Suppose, instead, that $u_0\in U_{+/+}$ and let $\kappa_{0,1}:=\chi(0,G_1(u_0))=2G_1(u_0)>0$ and $\kappa_{0,2}:=\chi(0,G_2(u_0))=2G_2(u_0)>0$.  We again consider the system of equations in \eqref{exrest}, this time with $\kappa_1=\kappa_{0,1}$, $\kappa_2=\kappa_{0,2}$, and unknowns $(u,\mu_1,\eta_1,\sigma_1,\sigma_2)$. We obtain a one-dimensional solution manifold through $u_0$ along which $\sigma_1=2(4-x-2y)\eta/5$ and $\sigma_2=4(2x-y-3)\eta/5$, where $x$ and $y$ are implicitly defined by
\begin{gather}
\chi(2(4-x-2y)\eta/5,3-x-4y)-\kappa_{0,1}=0,\\
\chi(4(2x-y-3)\eta/5,x-y)-\kappa_{0,2}=0.
\end{gather}
Denote the $u$ corresponding to $\eta=1$ by $u_1$. Then $(u_1,\Psi(u_1),2(4-x_1-2y_1)/5,4(2x_1-y_1-3)/5,\kappa_{0,1})$ is a solution to \eqref{exrest} with $\eta=1$, $\kappa_2=\kappa_{0,2}$, and unknowns $(u,\mu_1,\sigma_1,\sigma_2,\kappa_1)$. Indeed, this point lies on a locally unique one-dimensional manifold of such solutions, along which $\sigma_1=2(4-x-2y)/5$ and $\sigma_2=4(2x-y-3)/5$, where $x$ and $y$ are implicitly defined by
\begin{gather}
\chi(2(4-x-2y)/5,3-x-4y)-\kappa_1=0,\\
\chi(4(2x-y-3)/5,x-y)-\kappa_{0,2}=0.
\end{gather}
Denote the $u$ {corresponding} to $\kappa_1=0$ by $u_2$. Then $(u_2,\Psi_1(u_2),2(4-x_2-2y_2)/5,4(2x_2-y_2-3)/5,\kappa_{0,2})$ is a solution to \eqref{exrest} with $\eta=1$, $\kappa_1=0$, and unknowns $(u,\mu_1,\sigma_1,\sigma_2,\kappa_2)$. Indeed, this point lies on a locally unique one-dimensional manifold of such solutions, along which $\sigma_1=2(4-x-2y)/5$ and $\sigma_2=4(2x-y-3)/5$, where $x$ and $y$ are implicitly defined by
\begin{gather}
\chi(2(4-x-2y)/5,3-x-4y)=0,\\
\chi(4(2x-y-3)/5,x-y)-\kappa_2=0.
\end{gather}
The corresponding $u$ for $\kappa_2=0$ then equals $\hat{u}$, as expected.

The numerical results in~\cref{fig:cont-path-L12} confirm these predictions. Here, $u_0=(1,2)$ implies that $G_1(u_0)=6$, $G_2(u_0)=1$, $\kappa_{0,1}=12$, and $\kappa_{0,2}=2$. Continuation along the solution manifold to \eqref{exrest} with $\kappa_1=\kappa_{0,1}$ and $\kappa_2=\kappa_{0,2}$ results in a curve that intersects the point $(x,y,\mu_1,\eta_1,\sigma_1,\sigma_2)=(1.7487,1.7479,1.1819,1.0000,-0.4978,-1.0004)$ represented by the yellow dots in~\cref{fig:cont-path-L12}. Continuation from this point along the solution manifold to \eqref{exrest} with $\eta=1$ and $\kappa_2=\kappa_{0,2}$ results in a curve that intersects the point $(x,y,\mu_1,\sigma_1,\sigma_2,\kappa_1)=(1.0000,0.5000,1.5000,0.8000,-1.2000,0)$ represented by the blue dots~\cref{fig:cont-path-L12}. Finally, consistent with the analytical solution, continuation along the solution manifold to \eqref{exrest} with $\eta=1$ and $\kappa_1=0$ yields the curves in~\cref{fig:cont-path-L12} connecting the blue dots to the black dots at $(x,y,\mu_1,\sigma_1,\sigma_2,\kappa_2)=(1.6667,0.3333,1.0000,0.6667,0,0)$.

\begin{figure}[h]
\centering
\includegraphics[width=3.0in]{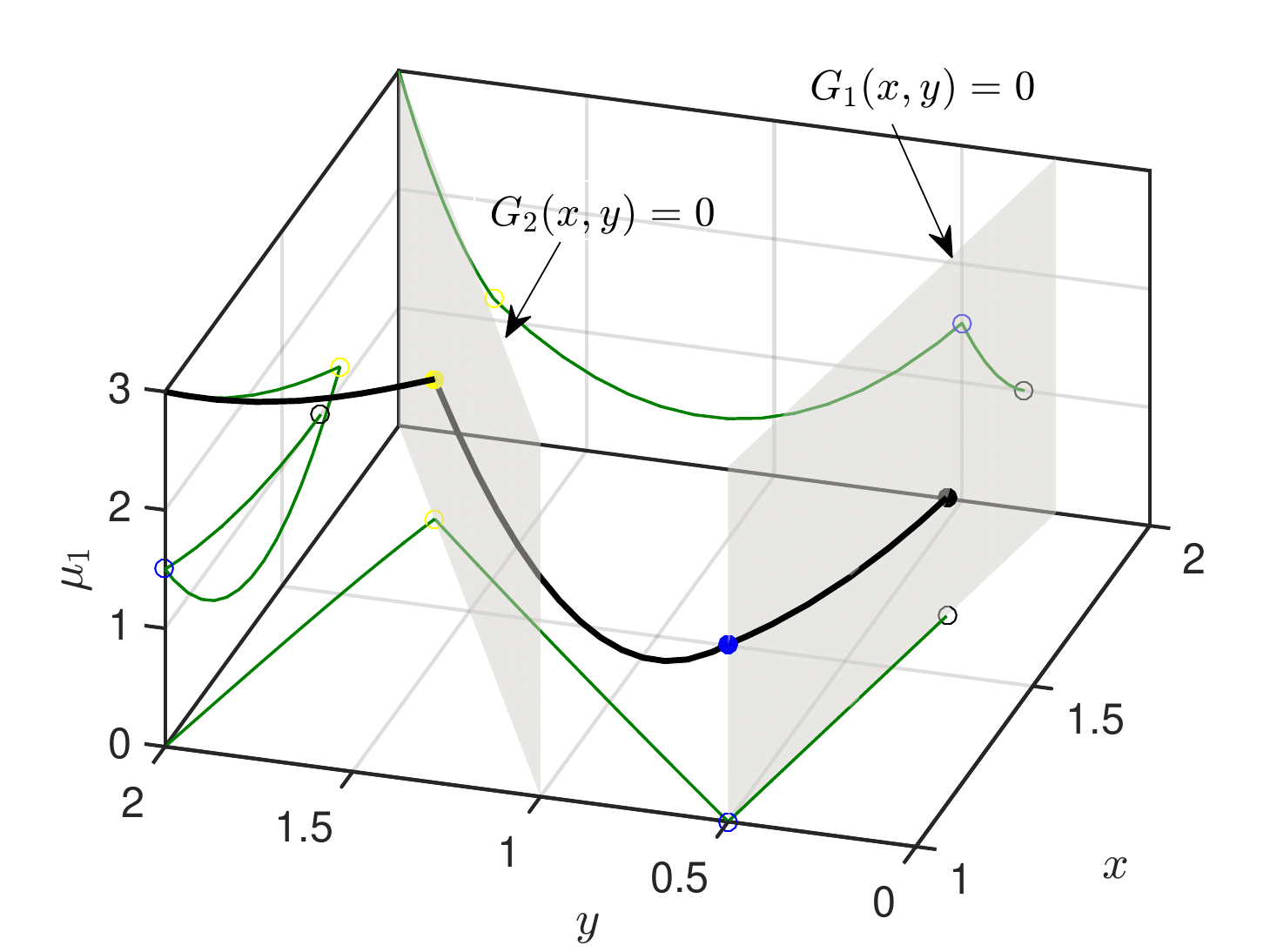}
\includegraphics[width=3.0in]{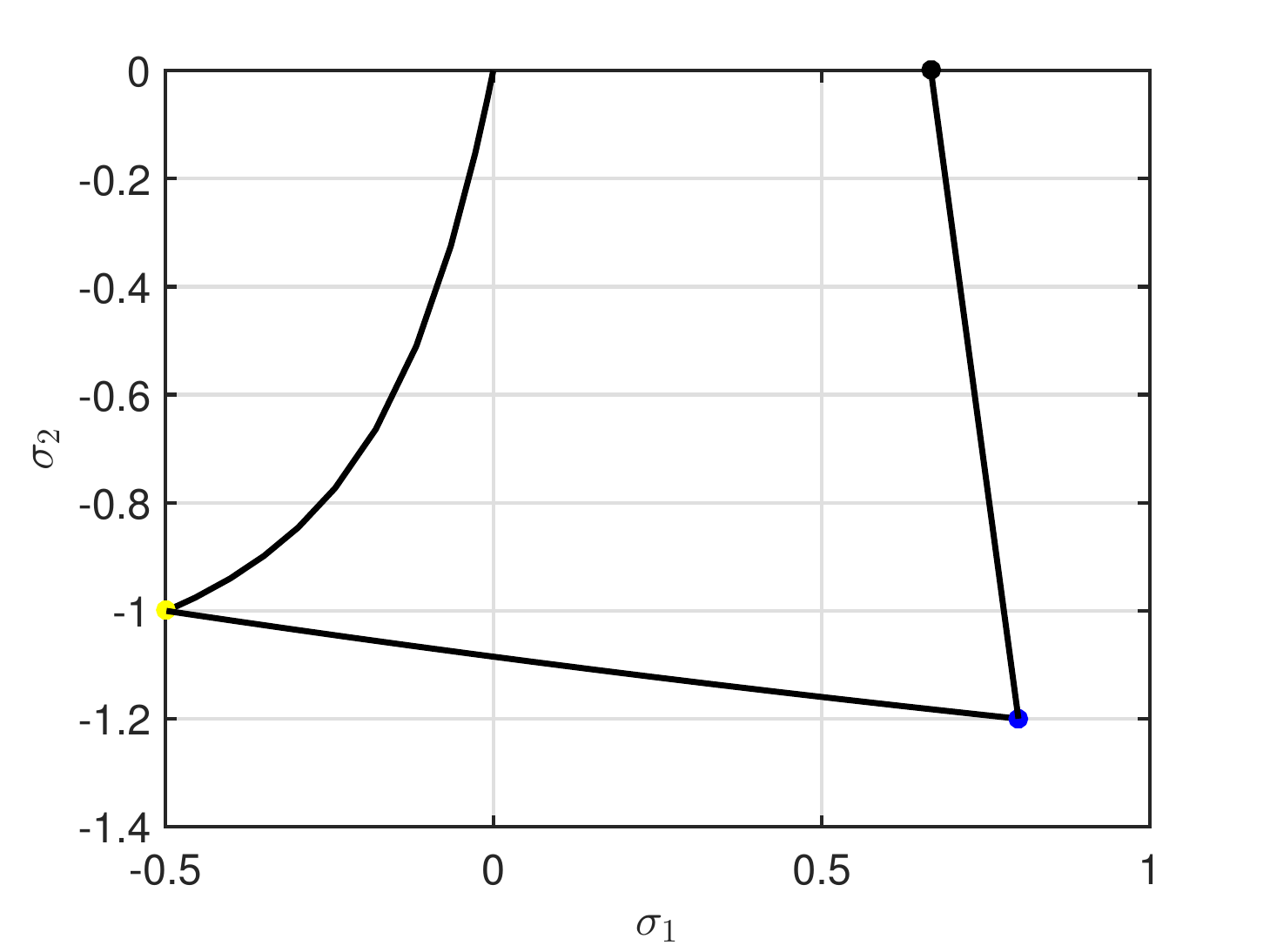}
\caption{Projections of continuation paths associated with a {successive} search for stationary solutions. {Here, dark green thin lines and hollow markers are used to denote projections of black thick lines and filled markers in three-dimensional space onto the three coordinate planes. Gray planes are used to represent tight constraints.} Starting from $u_0=(1,2)$ and holding $\kappa_1$ and $\kappa_2$ fixed at $12$ and $2$, respectively, continuation proceeds along a  one-dimensional solution manifold until $\eta_1=1$ (yellow dots). Fixing $\eta_1$ and varying $\kappa_1$, continuation is conducted until $\kappa_1=0$ (blue dots). Finally, with fixed $\kappa_1$ and $\kappa_2$ free to vary, the terminal points (black dots) on the tertiary manifolds denote the stationary points where $\kappa_{2}=0$.}
\label{fig:cont-path-L12}
\end{figure}

We finally consider $u_0\in U_{-/-}$. Locally, the solutions to the system of equations \eqref{exrest} with $\kappa_1=0$, $\kappa_2=0$, and unknowns $(u,\mu_1,\eta_1,\sigma_1,\sigma_2)$ constitute a two-dimensional manifold of the form $(u,\Psi(u),0,0,0)$ for arbitrary $u\approx u_0$. We obtain a one-dimensional manifold by introducing the function $\Psi_2:u\mapsto y$ and considering the system of equations
\begin{gather}\label{exrest2}
\left\{\begin{array}{c}
\Psi_1(u)-\mu_1=0,\\
\Psi_2(u)-\mu_2=0,\\
\left(D\Psi(u)\right)^\top\eta+\left(DG(u)\right)^\top\sigma=0,\\
\chi\left(\sigma_1,-G_1(u)\right)-\kappa_1=0,\\
\chi\left(\sigma_2,-G_2(u)\right)-\kappa_2=0,\end{array}\right.
\end{gather}
with $\kappa_1=0$, $\kappa_2=0$, $\mu_2=y_0$, and unknowns $(u,\mu_1,\eta_1,\eta_2,\sigma_1,\sigma_2)$. Along this manifold, $\eta_1=\eta_2=\sigma_1=\sigma_2=0$ while $y=y_0$ and $x$ ranges between $3-4y_0$ and $y_0$ corresponding to singular points on the $G_1=0$ and $G_2=0$ surfaces. The point with $u_1=(2,y_0)$ is a stationary point of $\Psi_1$ along this manifold that lies in $U_{-/-}$ provided that $y_0<1/4$. The stationary point corresponds to a branch point through which runs a secondary one-dimensional manifold with solutions of the form $(2,y_0,2(y_0-1)^2,\eta_1,4(1-y_0)\eta_1,0,0)$. It follows that $(2,y_0,2(y_0-1)^2,y_0,4(1-y_0),0,0)$ is a solution to the system of equations \eqref{exrest} with $\kappa_1=\kappa_2=0$, $\eta_1=1$, and unknowns $(u,\mu_1,\mu_2,\eta_2,\sigma_1,\sigma_2)$. The corresponding one-dimensional solution manifold consists of solutions of the form $(2,\mu_2,2(\mu_2-1)^2,\mu_2,4(1-\mu_2),0,0)$ and terminates at the singular point $(2,1/4,9/8,1/4,3,0,0)$ on the $G_1=0$ surface. As before, a secondary one-dimensional manifold, along which $G_1\equiv 0$ and $\sigma_1\ne 0$, also terminates at this point. Substitution yields $x=3-4y$, $\sigma_1=2(4y-1)$, $\sigma_2=0$, and $\eta_2=12(1-3y)$. The corresponding $u$ for $\eta_2=0$ then equals $\hat{u}$, as expected. These predictions are confirmed by the numerical results in~\cref{fig:cont-path-Lempty} where $y_0=-2$. The singular point on $G_1=0$ is again fortuitously bypassed by the pseudo-arclength algorithm {(cf.~the middle panel in \cref{fig:arclength})}.

\begin{figure}[ht!]
\centering
\includegraphics[width=3.0in]{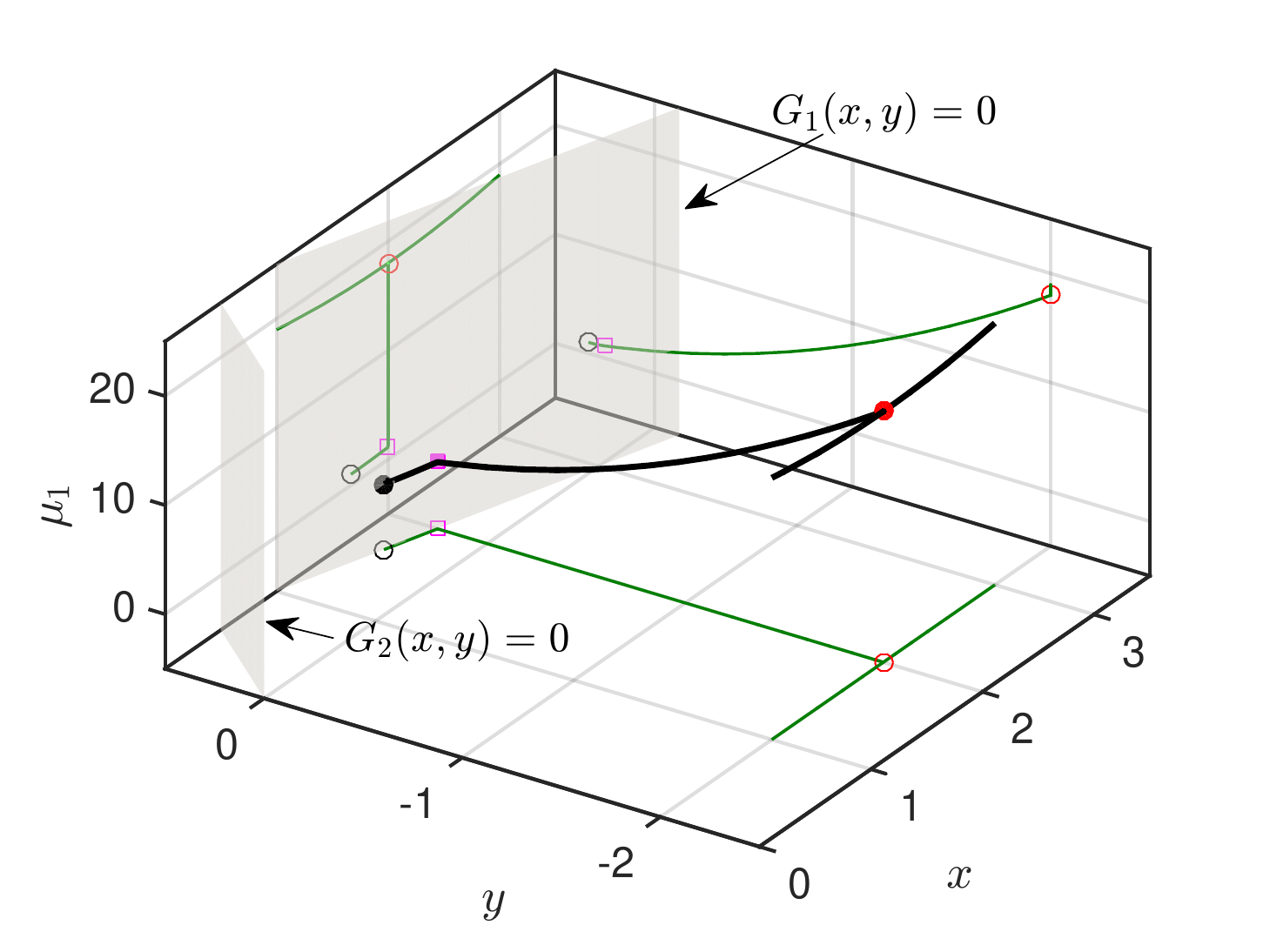}
\includegraphics[width=3.0in]{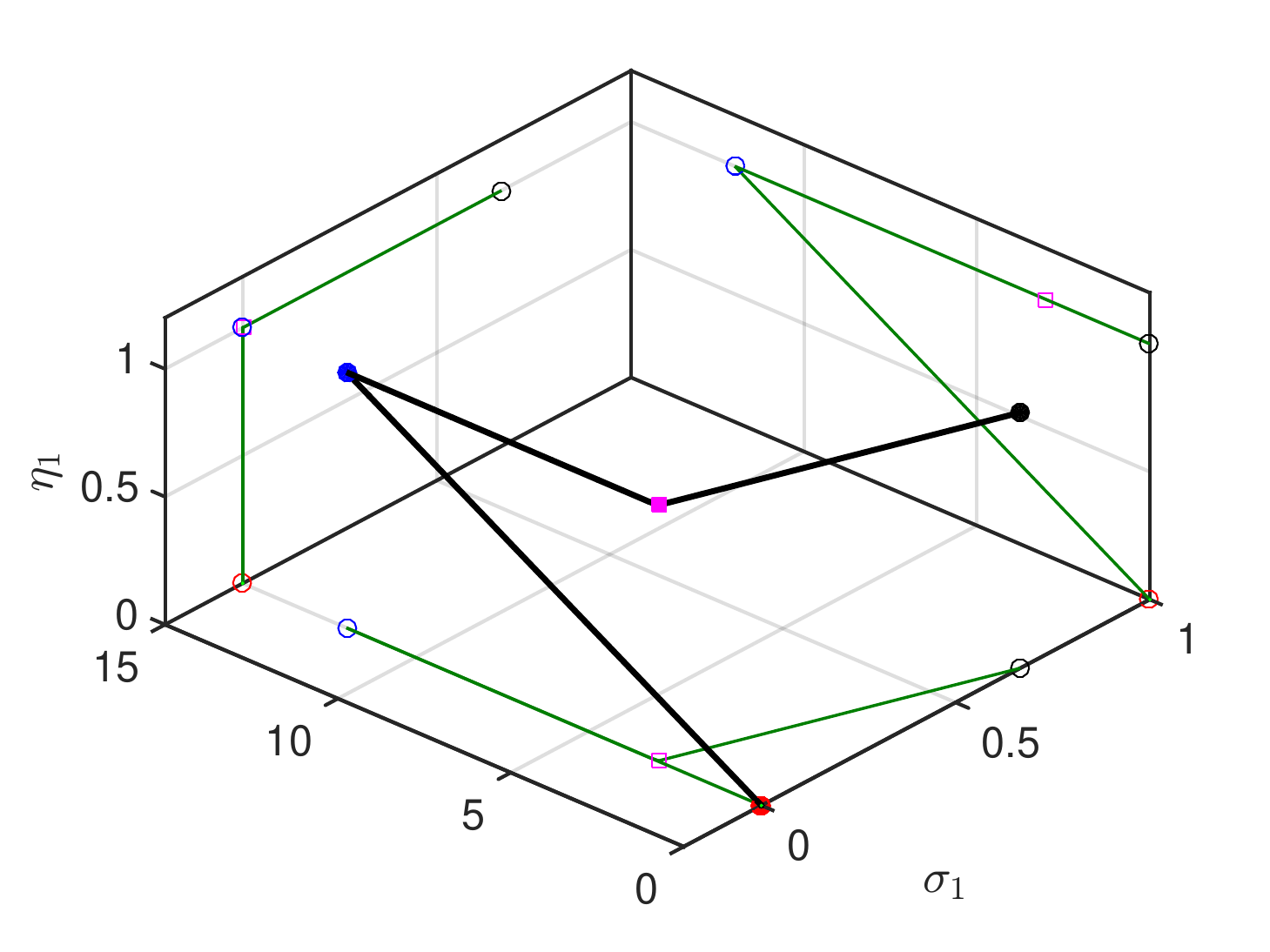}
\caption{Projections of continuation paths associated with a {successive} search for stationary solutions. {Here, dark green thin lines and hollow markers are used to denote projections of black thick lines and filled markers in three-dimensional space onto the three coordinate planes. Gray planes are used to represent tight constraints.} Starting from $u_0=(1,-2)$ and holding $\kappa_1$, $\kappa_2$ , and $\mu_2$ fixed at $0$, $0$, and $-2$, respectively, a fold point  in $\mu_1$, denoted by the red dots, is detected along the first solution manifold in the $\eta_1=\eta_2=\sigma_1=\sigma_2=0$ subspace. Along the secondary branch, blue dots denote locations where $\eta_1=1$. Notably, $u$ remains unchanged along this branch. Finally, with fixed $\eta_{1}$ and $\eta_2$ free to vary, the terminal points (black dots) on the tertiary manifolds denote the stationary points where $\eta_2=0$. This run bypasses a singular point (magenta squares) and then continues in the plane $G_1(x,y)=0$.}
\label{fig:cont-path-Lempty}
\end{figure}

%

\section{Successive continuation}
\label{sec:cont}

The finite-dimensional motivating example in the previous section highlights a general approach to locating candidate stationary points of an objective function along a constraint manifold. In this section, we generalize this procedure to the infinite-dimensional context that includes boundary-value problem and integral constraints and objective functions.

We proceed to consider the function
\begin{equation}
\label{eq:aug}
F_\mathrm{aug}:(u,\lambda,\eta,\sigma,\mu,\nu,\kappa)\mapsto\begin{pmatrix}\Phi(u)\\\Psi(u)-\mu\\(D\Phi({u}))^\ast{\lambda}+(D\Psi({u}))^\ast{\eta}+(DG({u}))^\ast{\sigma}\\ {\eta}-\nu\\ K({\sigma},-G({u}))-\kappa \end{pmatrix},
\end{equation}
which augments the function $F$ by incorporating a subset of the left-hand sides of the necessary KKT conditions. Various restrictions of $F_{\mathrm{aug}}$ result by fixing different subsets of the components of $\mu$, $\nu$ and $\kappa$. For example, the preceding discussion shows that $\big(u,\lambda,\eta,\sigma,\mu\big)=\big(\hat{u}, \hat{\lambda},\hat{\eta},\hat{\sigma},\Psi(\hat{u})\big)$ is a root of the restriction obtained by fixing $\nu_1=1$, $\nu_{\{2,\ldots,l\}}=0$ and $\kappa=0$. It is notably difficult to locate such a root without an \emph{a priori} approximation. To overcome this difficulty, we propose a modification to the successive continuation algorithm introduced by Kern{\'e}vez and Doedel~\cite{DKK91} and described by us in~\cite{staged_adjoint} in the context of a nonlinear function similar in form to \eqref{eq:aug} but with $q=0$.

Specifically, suppose that $u_0$ is a root of $\Phi$ for which the set of active constraints is empty, i.e., $\{i:G_i(u_0)=0\}=\emptyset$, thus avoiding the singularity of $\chi$ at $(0,0)$. Then, $(u_0,0,0,0,\Psi(u_0),0,\kappa_0)$ is a root of $F_\mathrm{aug}$ provided that the elements of $\kappa_0$ indexed by $\mathbb{Z}:=\{i:G_i(u_0)< 0\}$ equal $0$, while those indexed by $\mathbb{P}:=\{i:G_i(u_0)> 0\}$ are positive. We proceed to develop a series of continuation problems whose solutions correspond to points on a sequence of embedded manifolds that continuously connect this initial root of the augmented continuation problem to a root of this problem with $\nu_1=1$, $\nu_{\{2,\ldots,l\}}=0$, and $\kappa=0$.

To this end, choose some index sets $\mathbb{I}\subset \{2,\ldots,l\}$ and $\mathbb{J}:=\{2,\ldots,l\}\setminus \mathbb{I}$, and consider the restriction $F_{\mathrm{rest}}$ obtained by fixing the values of $\mu_{\mathbb{I}}$, $\nu_{\mathbb{J}}$, and $\kappa$ to $\Psi_{\mathbb{I}}(u_0)$, $0$, and $\kappa_0$, respectively. It follows by construction that $(u,\lambda,\eta,\sigma,\mu_1,\mu_\mathbb{J},\nu_1,\nu_\mathbb{I})=(u_0,0,0,0,\Psi_1(u_0),\Psi_{\mathbb{J}}(u_0),0,0)$ is a root of $F_\mathrm{rest}$. Indeed, by continuity of $G$, every root $u\approx u_0$ of the function
\begin{equation}
\label{eq:red}
F_\mathrm{red}:u\mapsto\begin{pmatrix}\Phi(u)\\\Psi_{\mathbb{I}}(u)-\Psi_{\mathbb{I}}(u_0)\\ K_{\mathbb{P}}(0,-G(u))-\kappa_{0,\mathbb{P}}\end{pmatrix}
\end{equation}
corresponds to a root of the form $(u,0,0,0,\Psi_1(u),\Psi_\mathbb{J}(u),0,0)$ of $F_\mathrm{rest}$. 

Now suppose that the range of $DF_\mathrm{red}(u_0)$ is $Y\times\mathbb{R}^{|\mathbb{I}|}\times\mathbb{R}^{|\mathbb{P}|}$ and that its nullspace is of dimension $d-|\mathbb{I}|-|\mathbb{P}|$, where $d$ equals the dimension of the nullspace of $D\Phi(u_0)$. It follows from the implicit function theorem that all roots of $F_\mathrm{red}$ near $u_0$ lie on a locally unique $d-|\mathbb{I}|-|\mathbb{P}|$-dimensional manifold. Consider the special case that $|\mathbb{I}|+|\mathbb{P}|=d-1$. Then, by Corollary~\ref{cor1}, all roots of $F_\mathrm{rest}$ sufficiently close to $(u_0,0,0,0,\Psi_1(u_0),\Psi_{\mathbb{J}}(u_0),0,0)$ lie on a one-dimensional manifold of points of the form $(u,0,0,0,\Psi_1(u),\Psi_{\mathbb{J}}(u),0,0)$, for some root $u$ of $F_\mathrm{red}$ provided that $\left(D\Psi_1(u_0)\right)^\ast$ is linearly independent of $\left(DF_\mathrm{red}(u_0)\right)^*$. If, instead, $u_0$ is a stationary point of $\Psi_1$ along the one-dimensional solution manifold of $F_\mathrm{red}=0$, then Lemma~\ref{lem2} implies that $(u_0,0,0,0,\Psi_1(u_0),\Psi_{\mathbb{J}}(u_0),0,0)$ is a branch point of $F_\mathrm{rest}$ through which runs a secondary one-dimensional solution manifold, locally parameterized by $\eta_1$, along which $\lambda$, $\eta_\mathbb{I}$, and $\sigma_\mathbb{P}$ vary.

Suppose that no element of $G_\mathbb{Z}$ equals $0$ along this manifold for $\eta_1\in [0,1]$. Continuation can then proceed from the point with $\eta_1=1$ along a sequence of one-dimensional manifolds of solutions to $F_\mathrm{rest}=0$ obtained by fixing $\eta_1=1$ and successively moving indices from $\mathbb{I}$ to $\mathbb{J}$ and fixing the corresponding elements of $\nu$ once they equal $0$. Suppose that no element of $G_\mathbb{Z}$ equals $0$ along any such segment. Continuation can then proceed from the point with $\eta_1=1$, $\eta_{2,\dots,l}=0$ along a sequence of one-dimensional manifolds of solutions to $F_\mathrm{rest}$ obtained by fixing $\eta$ and successively allowing the elements of $\kappa_\mathbb{P}$ to vary and fixing them once they equal $0$. Provided that no element of $G_\mathbb{Z}$ equals $0$ along any such segment, the final point corresponds to the sought stationary point.

Alternatively, consider the case that $|\mathbb{I}|+|\mathbb{P}|=d$, in which case $DF_\mathrm{red}(u_0)$ is a bijection. Lemma~\ref{lem3} then implies that $(u_0,0,0,0,\Psi_1(u_0),\Psi_{\mathbb{J}}(u_0),0,0)$ lies on a one-dimensional manifold of solutions to $F_\mathrm{rest}=0$ locally parameterized by $\eta_1$, along which $\lambda$, $\eta_\mathbb{I}$, and $\sigma_\mathbb{P}$ vary. Suppose that no element of $G_\mathbb{Z}$ equals $0$ along this manifold for $\eta_1\in [0,1]$. The stationary point $\hat{u}$ may again be sought following the approach in the preceding paragraph.

Finally, note that if any element of $G_\mathbb{Z}$ were to equal $0$ along any of the segments, Lemma~\ref{lem4} allows for the possibility of branch switching to a one-dimensional solution manifold along the corresponding zero surface. This manifold is again locally parameterized by $\eta_1$, and $\sigma_k,\sigma_\mathbb{P}\ne 0$ for $\eta_1$ close, but not equal to $0$. The stationary point $\hat{u}$ may again be sought following the successive continuation approach.

The various approaches to locating a stationary point in the example in Section~\ref{sec:example} correspond to the possibilities discussed above. Throughout the analysis, $d=2$. 
\begin{itemize}
\item In the case that $u_0\in U_{+/-}$, $\mathbb{I}=\emptyset$ and $\mathbb{P}=\{1\}$ so that $|\mathbb{I}|+|\mathbb{P}|=d-1$. The analysis proceeds by locating a fold along the solution manifold with trivial Lagrange multipliers, branch switching to a secondary branch with nontrivial Lagrange multipliers, and then driving $\kappa_1$ to $0$. 
\item In the case that $u_0\in U_{-/+}$, $\mathbb{I}=\emptyset$ and $\mathbb{P}=\{2\}$ so that, again, $|\mathbb{I}|+|\mathbb{P}|=d-1$. The analysis proceeds by continuing to a singular point on $G_1=0$, branch switching onto a secondary branch on $G_1=0$, and then driving $\kappa_2$ to $0$.
\item In the case that $u_0\in U_{+/+}$, $\mathbb{I}=\emptyset$ and $\mathbb{P}=\{1,2\}$ so that $|\mathbb{I}|+|\mathbb{P}|=d$. The analysis proceeds by continuing along a branch of nontrivial Lagrange multipliers and then successively driving $\kappa_1$ and $\kappa_2$ to $0$. 
\item Finally, in the case that $u_0\in U_{-/-}$, the problem is enlarged with the function $\Psi_2$, thereby making $\mathbb{I}=\{2\}$ and $\mathbb{P}=\emptyset$ so that, again, $|\mathbb{I}|+|\mathbb{P}|=d-1$. The analysis proceeds by locating a fold along the solution manifold with trivial Lagrange multipliers, branch switching to a secondary branch with nontrivial Lagrange multipliers, and then driving $\eta_2$ to $0$, first along a branch with $G_1\ne 0$ to a singular point on $G_1=0$ and then branch switching onto a secondary branch on $G_1=0$.
\end{itemize}
As we see from this enumeration, the different scenarios may not be identified a priori, but suggest a great degree of flexibility to the analyst when choosing the initial point $u_0$ and the set $\mathbb{I}$.

{We conclude this section with a few comments on the proposed algorithm:}
\begin{itemize}
\item {\emph{Initialization.} The algorithm requires the user to select the initial point $u_0$ and the set of initially inactive continuation parameters $\mathbb{I}$ such that $\mathbb{I}+\mathbb{P}\in\{d,d-1\}$, where $d$ denotes the dimension of the solution manifold to the zero problem $\Phi(u)=0$. As seen in the motivating example, there is a great deal of flexibility in this selection, one that may allow for different approaches to one of possibly many local extrema. In particular, initial solution guesses in the infeasible region may be used for the successful search of optima. We are not able to propose a systematic selection algorithm beyond the principles outlined above.}

\item {\emph{Number of continuation runs.} 
The number of subproblems analyzed in the successive continuation approach is determined by the number of control or design variables rather than by the number of constraints. Indeed, in the generalized Kern{\'e}vez and Doedel approach, $d+1$ successive continuation runs are involved to obtain optimal solutions. Specifically, if $\mathbb{I}+\mathbb{P}=d$, continuation is performed until $\nu_1=1$ in the first run and the remaining $d$ runs drive nonzero elements of $\nu_{\mathbb{I}}$ and $\kappa_{\mathbb{P}}$ to $0$, one at a time. If, instead, $\mathbb{I}+\mathbb{P}=d-1$, due to branch switching, the first \emph{two} runs are conducted to drive $\nu_1$ to $1$. The remaining $d-1$ run are then successively performed to drive $\nu_{\mathbb{I}}$ and $\kappa_{\mathbb{P}}$ to $0$.}

\item {\emph{Continuation order.} The algorithm requires the user to commit to an order in which elements associated with $\mathbb{I}$ are released, and constraints associated with $\mathbb{P}$ are imposed. If the solution to the first-order necessary conditions is not unique, different choices may yield distinct locally optimal solutions, as illustrated in ref.~\cite{staged_adjoint}. We leave the effects of the imposed order of constraints to future studies.}
\end{itemize}

\section{Some implementation details}
\label{sec:implementation}
{A fundamental element of the successive continuation algorithm is the analysis of solutions of the augmented continuation problem $F_{\mathrm{aug}}=0$ with appropriate restrictions on the elements of $\mu$, $\nu$, and $\kappa$. A practical implementation of the algorithm should then consider two perspectives, namely, that of formulating the corresponding continuation problem and that of solving the formulated problem. As we did in the motivating example, one may manually derive the restricted continuation problem and then solve it analytically using symbolic computation packages. Given the difficulty of finding analytical solutions, numerical continuation arises as a powerful alternative for characterizing the solution manifolds for each of the restricted continuation problems. One can apply packages like \textsc{auto}~\cite{auto} and  \textsc{coco}~\cite{coco} to perform such continuation.}

{The numerical results documented in this paper were produced using \textsc{coco} because of its support for automatic generation of the corresponding adjoints (or discretized approximations thereof). The construction of adjoints is not a trivial task if the optimization problem has differential or integral constraints. Packages supporting the automatic computation of adjoints include \textsc{sundials}~\cite{sundials} for ordinary differential equations and differential-algebraic equations, and \textsc{dolfin-adjoint}~\cite{dolfin-adjoint} for partial differential equations. However, numerical continuation is not available in these packages. A recent release of \textsc{coco} provides a predefined library of realizations for the adjoints of common types of integral, differential, and algebraic operators. This feature of \textsc{coco}, coupled with the ease in which continuation parameters may be fixed or released, makes the implementation of the algorithm both intuitive and straightforward using this package. Nevertheless, every problem considered in this paper could in theory be approached using other continuation packages.}

A key feature of \textsc{coco} is its support for a staged paradigm of problem construction, in which a continuation problem is decomposed into a ``forward-coupled'' set of equations. Specifically, at each stage of construction, equations are added that depend on subsets of variables introduced in previous stages and a new set of variables. The full set of unknowns is therefore not known until the complete problem has been constructed.

In its original form (released in 2013), \textsc{coco} was designed to construct problems of the form $F(u,\mu)=0$, where
\begin{equation}\label{orig}
F:(u,\mu)\mapsto\begin{pmatrix}\Phi(u)\\\Psi(u)-\mu\end{pmatrix}
\end{equation}
in terms of sets of finite-dimensional \textit{zero functions} $\Phi$, \textit{monitor functions} $\Psi$, \textit{continuation variables} $u$, and \textit{continuation parameters} $\mu$. Various restrictions obtained by fixing elements of $\mu$ could then be realized at run-time using the appropriate \textsc{coco} syntax. Each such restriction would correspond to analysis of an embedded submanifold of the solution manifold to the original \textit{zero problem} $\Phi(u)=0$. For infinite-dimensional problems, $F$ would be implemented in terms of suitable discretizations of $u$, $\Phi$, and $\Psi$.

In our recent work~\cite{staged_adjoint}, an expanded definition of the \textsc{coco} construction paradigm allowed for analysis of problems of the form $F(u,\lambda,\eta,\mu,\nu)=0$, where
\begin{equation}\label{expanded}
F:(u,\lambda,\eta,\mu,\nu)\mapsto\begin{pmatrix}\Phi(u)\\\Psi(u)-\mu\\\Lambda_\Phi^\top(u)\cdot\lambda+\Lambda_\Psi^\top(u)\cdot\eta\\\eta-\nu\end{pmatrix}
\end{equation}
in terms of additional sets of finite-dimensional matrix-valued \textit{adjoint functions} $\Lambda_\Phi$ and $\Lambda_\Psi$, \textit{continuation multipliers} $\lambda$ and $\eta$, and continuation parameters $\nu$. The expanded definition supported the use of a successive continuation technique for locating stationary points of one of the monitor functions along the solution manifold to the zero problem $\Phi(u)=0$. In this context, the transposes $\Lambda_\Phi^\top$ and $\Lambda_\Psi^\top$ represented the (discretized if necessary) adjoints of the Frechet derivatives of the functions $\Phi$ and $\Psi$, and $\lambda$ and $\eta$ were the corresponding \textit{Lagrange multipliers}.

Notably, in \eqref{expanded}, the additional entries $\Lambda_\Phi^\top(u)\cdot\lambda+\Lambda_\Psi^\top(u)\cdot\eta$ may be combined with the original zero problem to form an expanded zero problem in $u$, $\lambda$, and $\eta$. Similarly, in terms of the augmented monitor functions $(u,\lambda,\eta)\mapsto(\Psi(u),\eta)$ the second and last entries may be combined into a single term of the form of the bottom entry in \eqref{orig}. Moreover, $\lambda$ and $\eta$ are introduced only once all the original zero and monitor functions have been added, at which point all continuation variables have been defined. Indeed, the corresponding equations can be added automatically by the \textsc{coco} core, rather than manually constructed by a user, provided that the user constructs $\Lambda_\Phi^\top$ and $\Lambda_\Psi^\top$ either concurrently with the addition of the corresponding zero and monitor functions or at the very least before calling the \textsc{coco} core to perform continuation. This expanded functionality was implemented with the November 2017 release of \textsc{coco} and discussed in tutorial documentation included with the release (\url{https://sourceforge.net/projects/cocotools/files/releases/}).

In the context of the proposed treatment of optimization under simultaneous equality and inequality constraints, we consider a further extension of the \textsc{coco} construction paradigm to problems of the form $F(u,\lambda,\eta,\sigma,\mu,\nu,\xi,\kappa)=0$, where
\begin{equation}
\label{furtherexpanded}
F:(u,\lambda,\eta,\sigma,\mu,\nu,\xi,\kappa)\mapsto\begin{pmatrix}\Phi(u)\\\Psi(u)-\mu\\\Lambda_\Phi^\top(u)\cdot \lambda+\Lambda_\Psi^\top(u)\cdot\eta+\Lambda_G^\top(u)\cdot\sigma\\\eta-\nu\\G(u)-\xi\\ K({\sigma},-G({u}))-\kappa \end{pmatrix}
\end{equation}
in terms of additional sets of finite-dimensional \text{inequality functions} $G$, matrix-valued adjoint functions $\Lambda_G$, continuation multipliers $\sigma$, and continuation parameters $\xi$ and $\kappa$. Here, $\Lambda_G^\top$ represents the (discretized if necessary) adjoint of the Frechet derivative of the function $G$ and $\sigma$ are the corresponding Lagrange multipliers. We match this expanded definition against the original \textsc{coco} construction paradigm by combining $\Lambda_\Phi^\top(u)\cdot \lambda+\Lambda_\Psi^\top(u)\cdot\eta+\Lambda_G^\top(u)\cdot\sigma$ with the original zero problem and defining the augmented monitor function $(u,\lambda,\eta,\sigma)\mapsto(\Psi(u),\eta,G(u),K(\sigma,-G(u)))$. As before, $\lambda$, $\eta$, and $\sigma$ are introduced only once all the original zero, monitor, and inequality functions have been added, at which point all continuation variables have been defined. Again, the corresponding equations can be added automatically by the \textsc{coco} core, rather than manually constructed by a user, provided that the user constructs $\Lambda_\Phi^\top$, $\Lambda_\Psi^\top$, and $\Lambda_G^\top$ either concurrently with the addition of the corresponding zero, monitor, and inequality functions or at the very least before calling the \textsc{coco} core to perform continuation. Such a modification to the \textsc{coco} core was implemented to produce the results reported in this paper.


In the case that $U$ and $Y$ are finite dimensional, the adjoints of the linearizations of $\Phi$, $\Psi$, and $G$ are straightforward to construct since they simply equal the transposes of the corresponding Jacobians. When $U$ and $Y$ are infinite dimensional, the adjoint contributions can be derived using a Lagrangian formalism. In~\cite{staged_adjoint}, a library of realizations of such functions and their adjoints for algebraic and integro-differential boundary-value problems were established. For example, for boundary-value problems defined in terms of ordinary differential equations, the unknown functions together with the corresponding unknown Lagrange multiplier functions were discretized over a finite mesh in the independent variable in terms of continuous, piecewise-polynomial functions. The original differential equations and the corresponding adjoint differential equations were then discretized by requiring that these be satisfied by the functional approximants on a set of collocation nodes. These collocation nodes and the associated quadrature weights were also used to approximate any integral functions of the original unknowns or Lagrange multipliers. Finally, in the implementation in \textsc{coco}, an adaptive mesh algorithm that varies the sizes and number of mesh intervals was implemented to obtain numerical solutions with desirable accuracy within reasonable limits on computational efforts. We utilize this adaptive mesh algorithm in the computations performed in the next section. For further details of the numerical implementation the reader is referred to~\cite{staged_adjoint,coco-recipes}.

We finally remark on the possibility of restricting continuation to a level surface of $G_k$ for some $k$, either by fixing $\xi_k$, or by fixing $\kappa_k$ provided that $\sigma_k$ is constant along the solution manifold if $G_k\ne 0$ or positive if $G_k=0$. As we saw in the finite-dimensional example, we may switch from continuation away from $G_k=0$ with $\kappa_k=0$ to continuation along $G_k=0$ without any change in the problem definition provided that the corresponding tangent directions are positively aligned, allowing the pseudo-arclength continuation algorithm to bypass the singular point with $\sigma_k=0$ {(cf.~the right panel in \cref{fig:arclength})}. In the contour plot for the Fischer-Burmeister function in~\cref{fig:ncp}, such a transition corresponds to switching from continuation along the vertical segment of the zero contour to continuation along the horizontal segment of the zero contour.
{By detecting the singularity and the corresponding change in the problem definition, one may branch on and off the surface $G_k=0$. The current implementation does not consider such detection and, instead, assumes manual switching between branches.}

\section{Applications}
\label{sec:application}

\subsection{Doedel's example}
We revisit a two-point boundary-value problem from \textsc{auto}~\cite{Doedel-ii} augmented by an inequality constraint.
Consider the following objective functional
\begin{equation}
    J:=\frac{1}{10}(p_1^2+p_2^2+p_3^2)+\int_0^1 (x_1(t)-1)^2 \mathrm{d}t
\end{equation}
subject to the differential equations
\begin{equation}
    \dot{x}_1 = x_2,\quad \dot{x}_2 = -p_1\exp(x_1+p_2x_1^2+p_3x_1^4)
\end{equation}
and boundary conditions
\begin{equation}
    x_1(0) = 0,\quad x_1(1) = 0.
\end{equation}
There are three local extrema~\cite{Doedel-ii,staged_adjoint}, two of which violate the integral inequality constraint
\begin{equation}
    G_{\mathrm{int}}:=0.5-\int_0^1 x_1(t) \mathrm{d}t\leq0.
\end{equation}
We apply the formalism from previous sections to locate the remaining extremum. Throughout the analysis, we restrict attention to a computational domain defined by $-0.2\leq p_1\leq3.5$, $-0.2\leq p_2\leq1.5$, $-0.2\leq p_3\leq1.0$, and $0\leq J \leq1.5$. 

In the notation of this paper, $u=(x(t),p),$ $\Phi$ represents the boundary-value problem, $d=3$, and $q=1$. Let $\Psi_1:u\mapsto J$, while $\Psi_i:u\mapsto p_{i-1}$ for $i=2,3,4$, such that $l=4$, and denote the corresponding continuation parameters $\mu_J$, $\mu_{p_1}$, $\mu_{p_2}$, and $\mu_{p_3}$. Finally, let $\kappa_\mathrm{int}$ denote the continuation parameter for the NCP condition associated with the integral inequality constraint. We let $\lambda(t)$, $\lambda_\mathrm{bc}$, $\eta_J$, $\eta_{p_1}$, $\eta_{p_2}$, $\eta_{p_3}$, and $\sigma_\mathrm{int}$ denote the corresponding Lagrange multipliers, and let $\nu_J$, $\nu_{p_1}$, $\nu_{p_2}$, and $\nu_{p_3}$ denote the remaining continuation parameters.

Since $l+q\geq d$, the requirement $|\mathbb{I}|+|\mathbb{P}|=d-1$ can be satisfied by suitable selection of the sets $\mathbb{I}$ and $\mathbb{P}$. We consider two cases here, viz., $\mathbb{P}=\{1\}$ with $\mathbb{I}=\{4\}$ and $\mathbb{P}=\emptyset$ with $\mathbb{I}=\{3,4\}$, respectively.

In the first case, let $u_0=(0,0,0.1,0)$, in which case $G_\mathrm{int}(u_0)=0.5$ in violation of the integral inequality constraint. A one-dimensional solution manifold with trivial Lagrange multipliers results by fixing $\mu_{p_3}$, $\nu_{p_1}=0$, $\nu_{p_2}=0$, and $\kappa_\mathrm{int}=1$ and allowing the remaining continuation parameters to vary. As seen in the right panel of~\cref{fig:cont-path-infeasible}, continuation results in the detection of one local extremum in the value of $\mu_J$. As predicted by \cref{lem2}, continuation is now possible along a secondary solution manifold, emanating from the extremum and parameterized by $\nu_J$. In contrast to the case when $\mathbb{P}=\emptyset$, the value of $u$ changes along this manifold, which is consistent with prediction given by \cref{cor2}. As required by the successive continuation paradigm, continuation is performed until $\nu_J=1$. Next, we proceed to fix $\nu_J = 1$ and allow $\mu_{p_3}$ to vary during continuation from this point until $\nu_{p_3}=0$. We arrive at the desired result by fixing $\nu_{p_3}=0$ and allowing $\kappa_{\mathrm{int}}$ to vary during continuation from this point until $\kappa_{\mathrm{int}}=0$. Notably, while we find it possible to drive $\nu_J$ and $\nu_{p_3}$ monotonically to $1$ and $0$, respectively, in the corresponding continuation runs, two fold points  in the value of $\kappa_{\mathrm{int}}$ are encountered on the way to $0$ in the final continuation run.



\begin{figure}[h]
\centering
\includegraphics[width=3.0in]{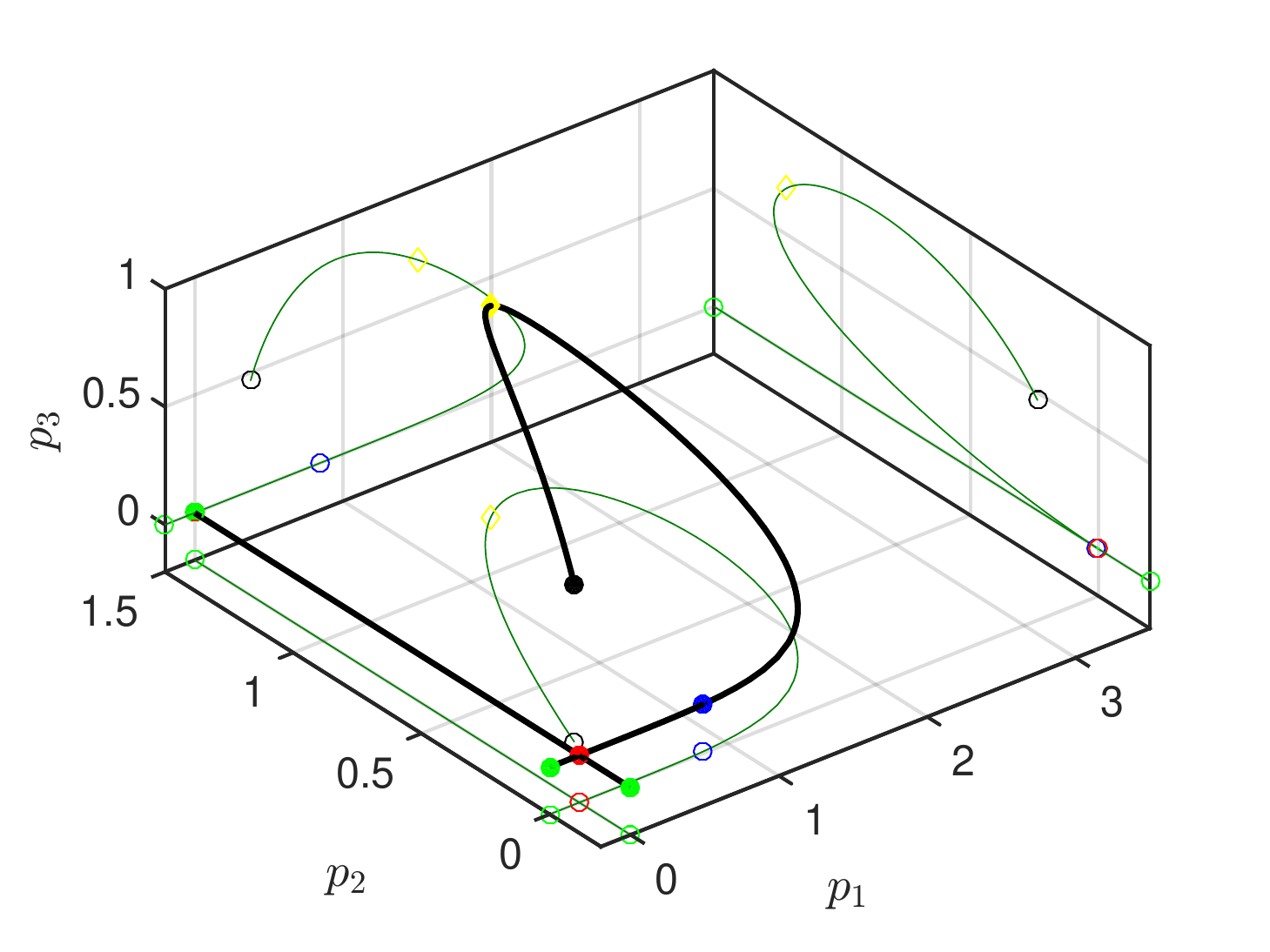}
\includegraphics[width=3.0in]{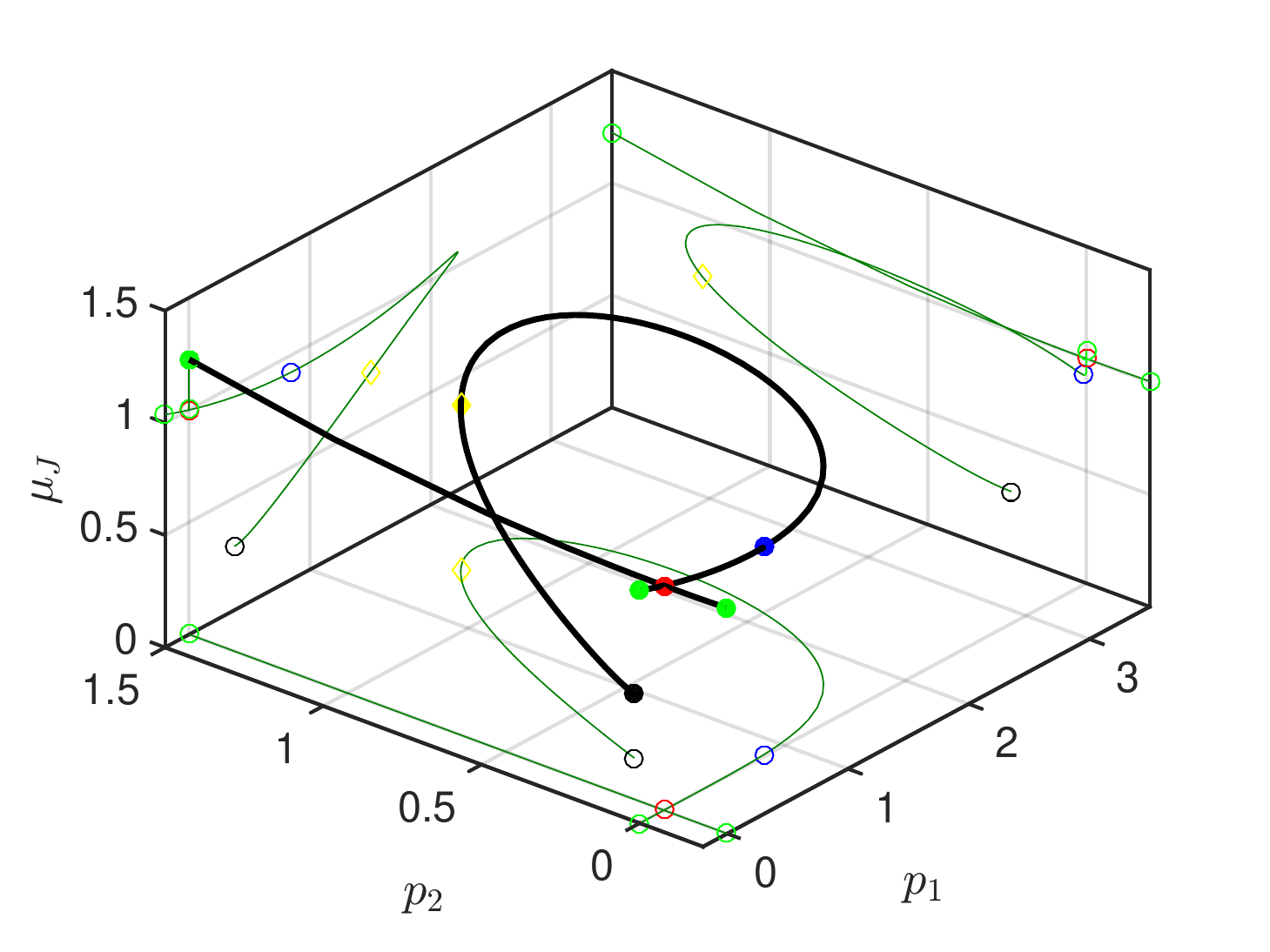}\\
\includegraphics[width=3.0in]{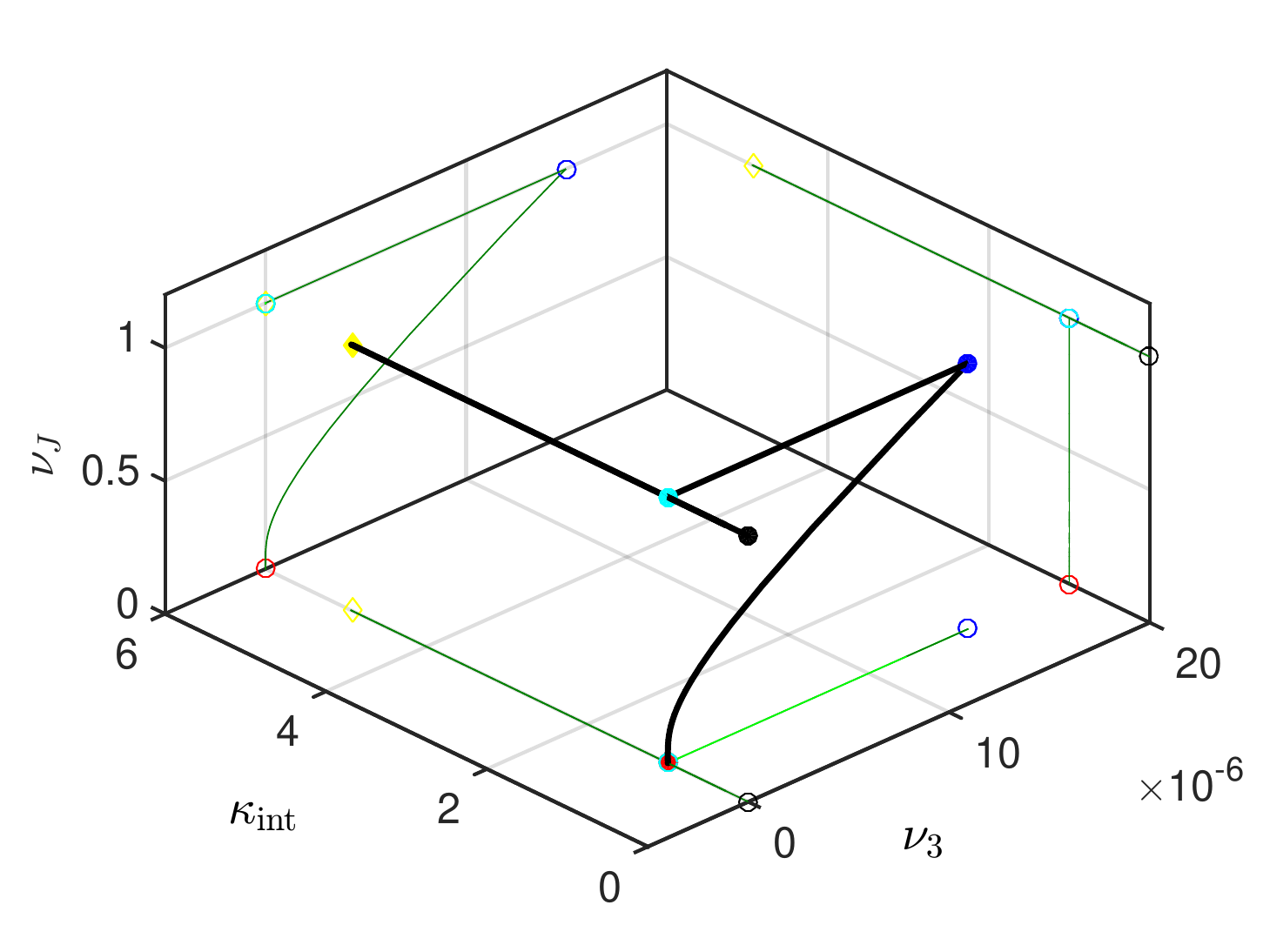}
\includegraphics[width=3.0in]{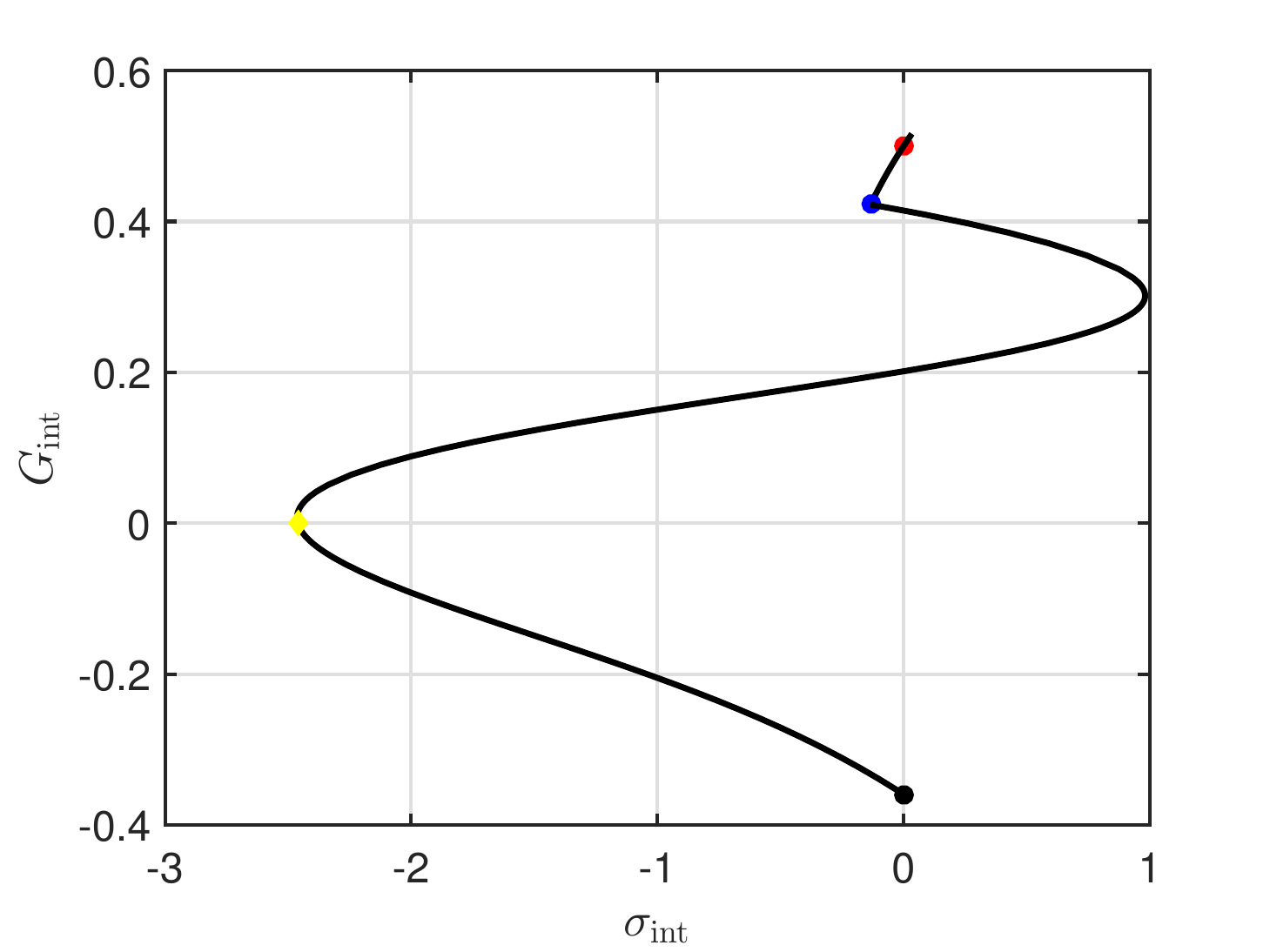}
\caption{Projections of continuation paths  associated with a {successive} search for stationary solutions. {Here, dark green thin lines and hollow markers are used to denote projections of black thick lines and filled markers in three-dimensional space onto the three coordinate planes.} Starting at $u_0=(0,0,0.1,0)$ and holding $\mu_{p_3}$, $\nu_{p_1}$, $\nu_{p_2}$, and $\kappa_\mathrm{int}$ fixed at $0$, $0$, $0$, and $1$, respectively, a fold point in $\mu_J$, denoted by red dots, is detected along the first solution manifold in the subspace with vanishing Lagrange multipliers. Along the secondary manifold, blue dots denote locations where $\nu_{J}=1$. With $\nu_J$, $\nu_{p_1}$, $\nu_{p_2}$, and $\kappa_\mathrm{int}$ fixed at $1$, $0$, $0$, and $1$, respectively, the cyan dots denote locations where $\nu_{p_3}=0$. As seen in the bottom left panel, $\nu_{p_3}$ decreases monotonically from $1.5\times 10^{-5}$ to $0$. Finally, the terminal points (black dots) on the fourth manifold denote the stationary points where $\kappa_{\mathrm{int}}=0$ assuming fixed $\nu_J$, $\nu_{p_1}$, $\nu_{p_2}$, and $\nu_{p_3}$. Notably, the last manifold crosses $G_{\mathrm{int}}=0$ at a regular point (yellow diamond) with $\sigma_{\mathrm{int}}\neq0$. The green dots denote points on the boundary of the computational domain.}
\label{fig:cont-path-infeasible}
\end{figure}

In the second case, we perform continuation of solutions to the original boundary-value problem from $x(t)=0$, $p_1=0$, $p_2=0.1$, and $p_3=0$ under variations in $p_1$. {As seen in~\cref{fig:cont-path-feasible}, three fold points are detected and we locate two local extrema (${FP}_{2,3}$) in the value of $\mu_J$ within the feasible region.}
We let $u_0$ equal the ${FP}_{3}$ of this initial run. Next, we consider continuation along the one-dimensional solution manifold with trivial Lagrange multipliers obtained by fixing $\mu_{p_2}$, $\mu_{p_3}$, $\nu_{p_1}=0$, and $\kappa_\mathrm{int}=0$ and allowing the remaining continuation parameters to vary. Branch switching from ${FP}_{2}$ or ${FP}_{3}$ should allow for continuation along secondary branches of solutions with nonzero Lagrange multipliers and unchanged values $u$. This is verified by the numerical results in~\cref{fig:cont-path-feasible}. 

Starting from ${FP}_3$, we drive $\nu_J$ to $1$. {As predicted by \cref{cor2}, $u$ is preserved in this run because $\mathbb{P}=\emptyset$.} Next, we fix $\nu_J=1$ and allow  $\mu_{p_2}$ to vary until $\nu_{p_2}=0$. In the final stage, we fix $\nu_{p_2}=0$ and allow $\mu_{p_3}$ to vary until $\nu_{p_3}=0$. In each of these runs the terminal values are approached monotonically. The final point corresponds to the local extremum at $p_1=0.37722$, $p_2=0.23782$, $p_3=0.46761$, from which we obtain $J=0.28459$. Starting from ${FP}_2$, we again drive $\nu_J$ to $1$ monotonically. However, once we fix $\nu_J=1$ and allow $\mu_{p_2}$ to vary, we observe a failure of the Newton solver to converge as we approach a singular point on $G_\mathrm{int}=0$ before $\nu_{p_2}=0$. This point is denoted by the magenta squares in~\cref{fig:cont-path-feasible}. {If we allow larger computational domain, it turns out that we can arrive at $\nu_{p_2}=0$ by continuting in the other direction.}

\begin{figure}[H]
\centering
\includegraphics[width=3.0in]{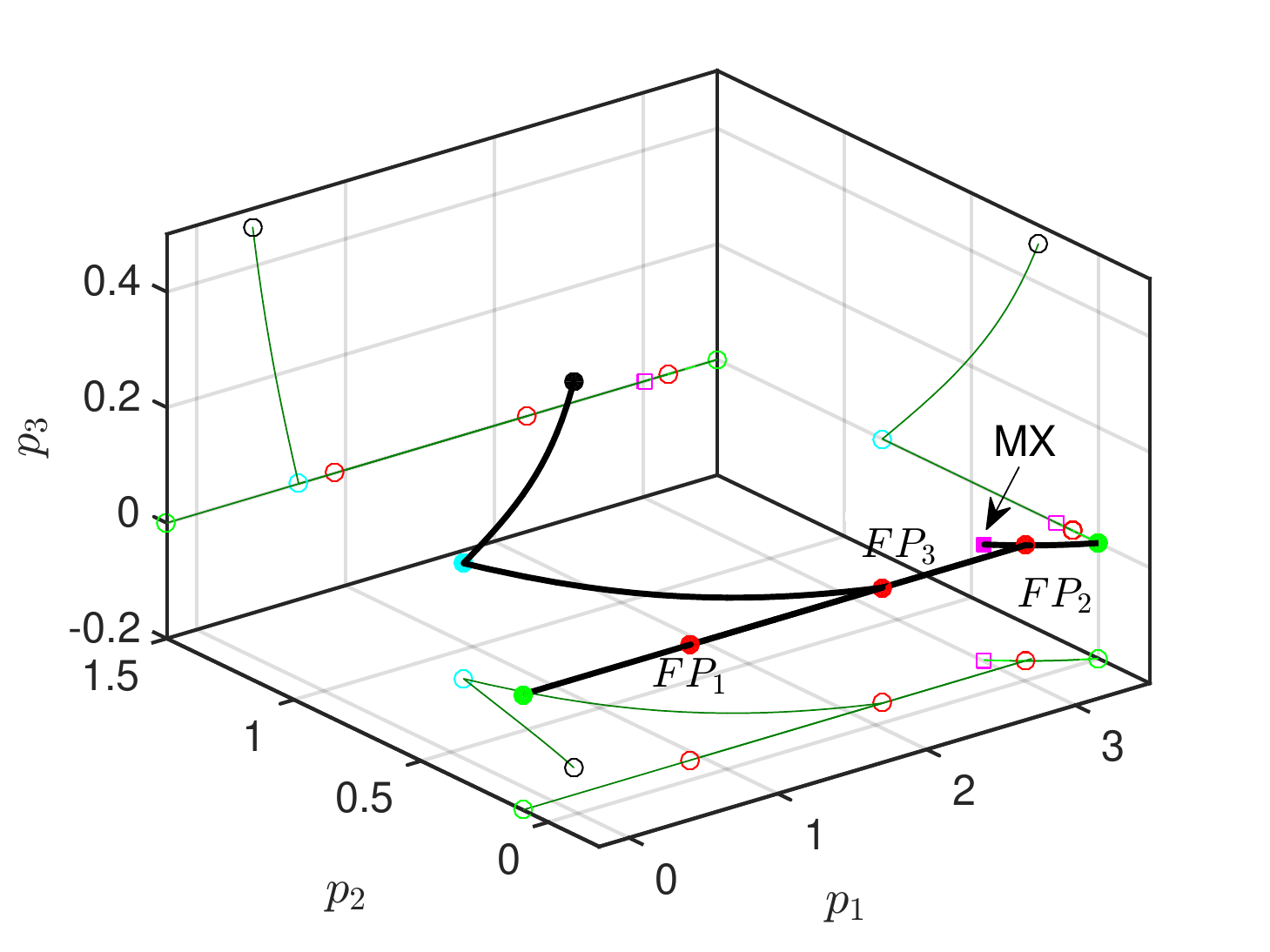}
\includegraphics[width=3.0in]{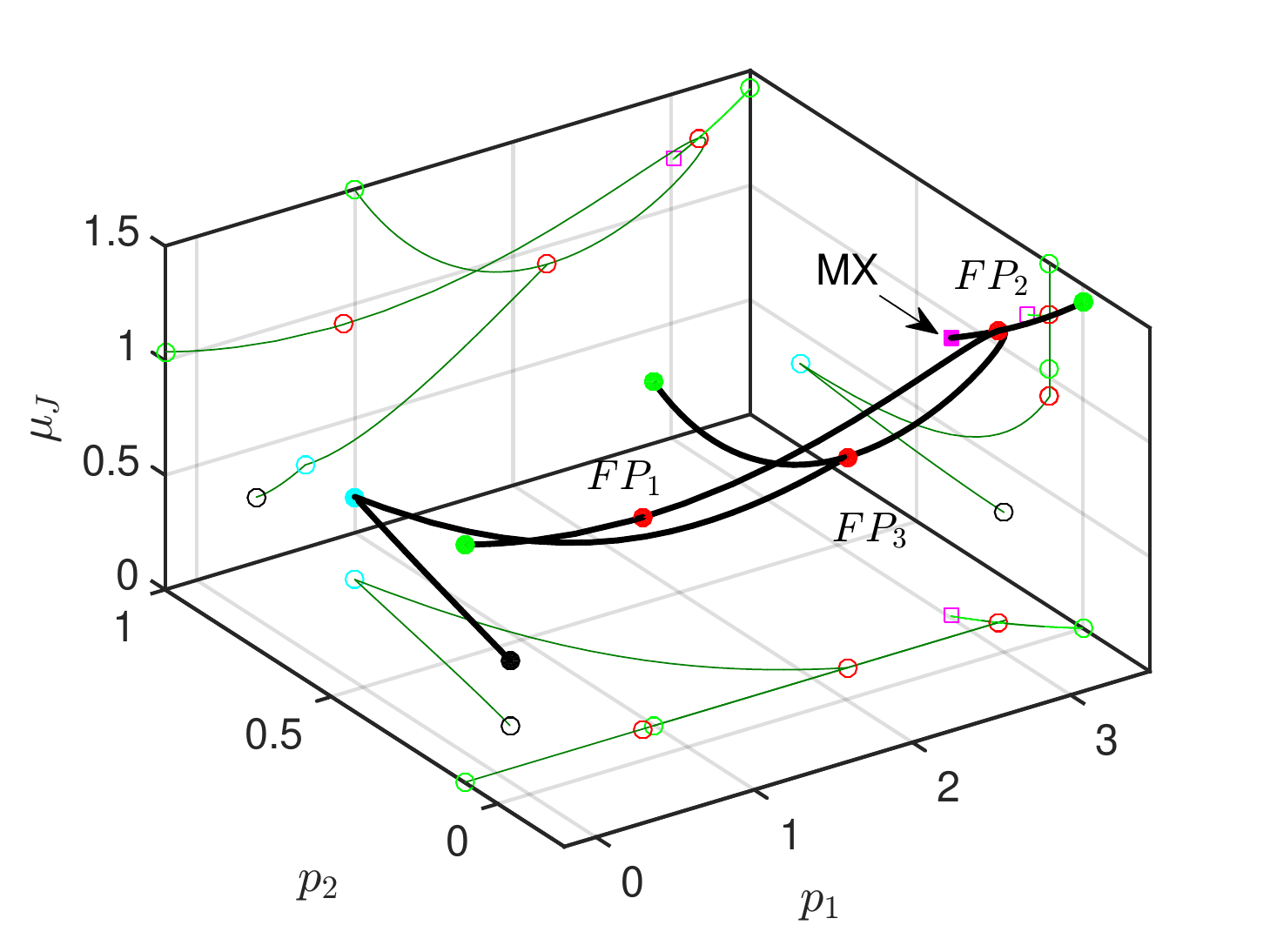}\\
\includegraphics[width=3.0in]{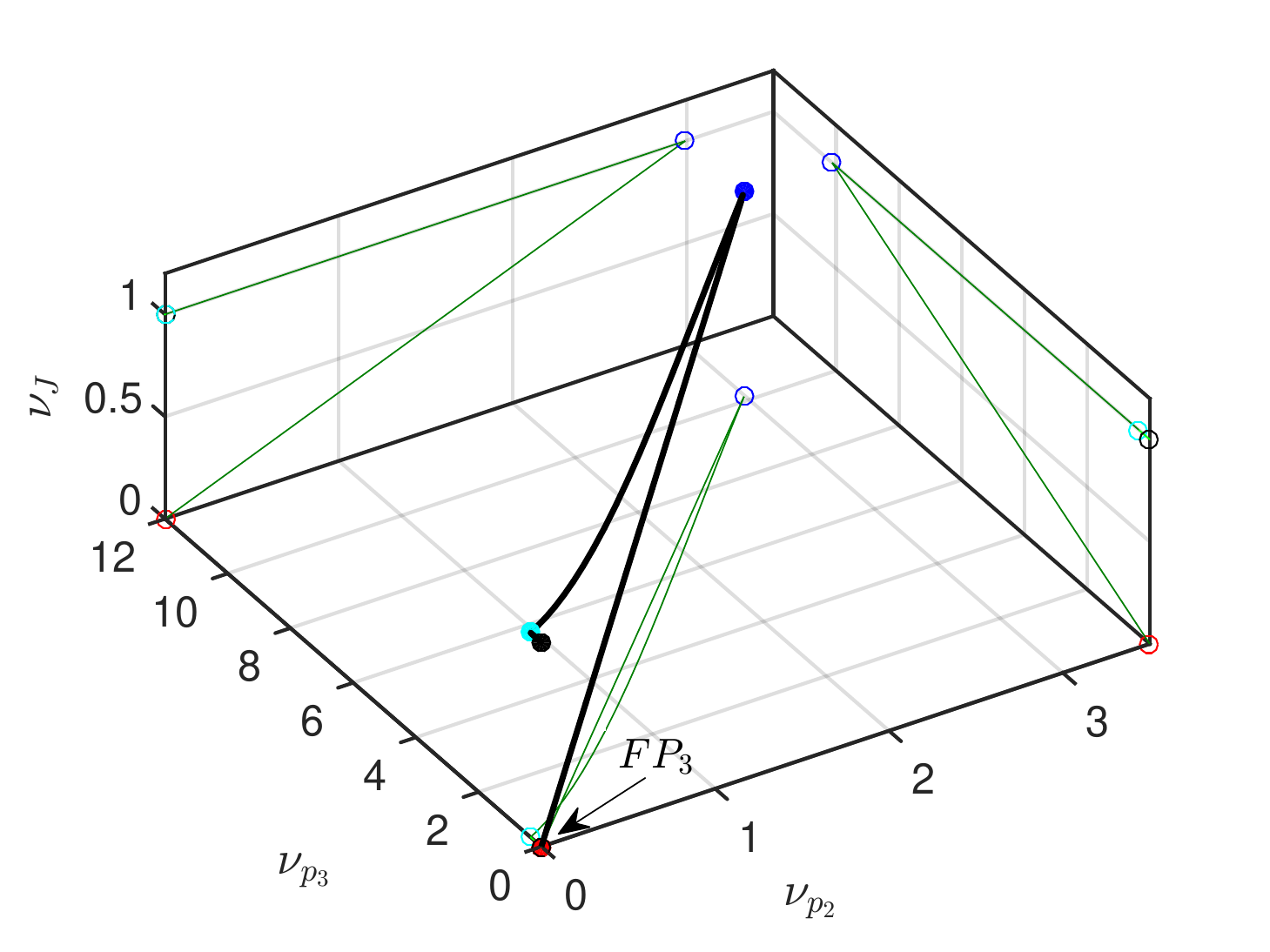}
\includegraphics[width=3.0in]{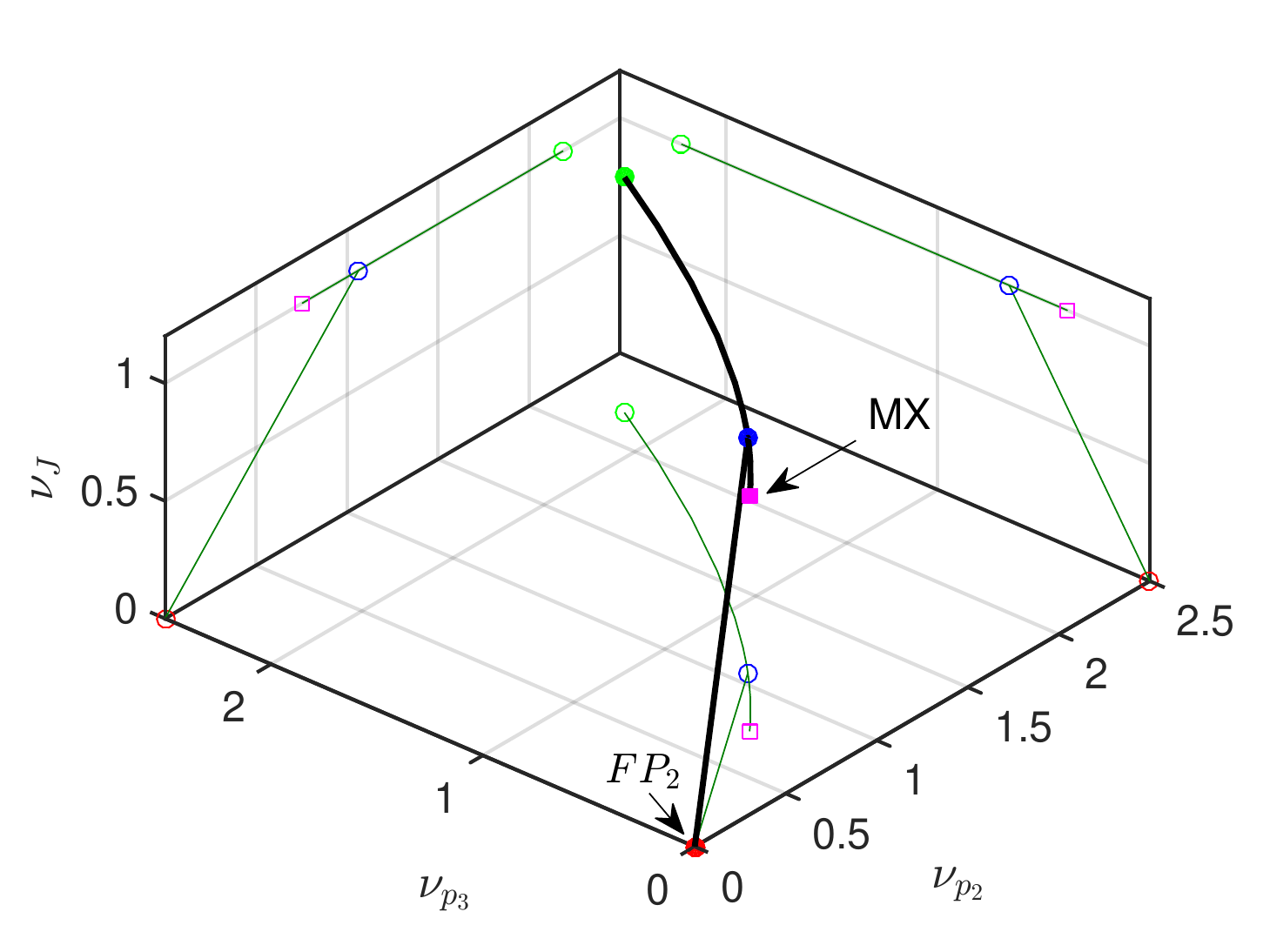}
\caption{Projections of continuation paths  associated with a {successive} search for stationary solutions. {Here, dark green thin lines and hollow markers are used to denote projections of black thick lines and filled markers in three-dimensional space onto the three coordinate planes.} A preliminary run is conducted to obtain an initial solution in the feasible region.
Starting at $\tilde{u}_0=(0,0,0.1,0)$, three fold points (denoted by  ${FP}_1$,  ${FP}_2$ and  ${FP}_3$ and identified by red dots) in $\mu_J$ are detected during continuation of the original boundary value problem (ignoring the inequality constraint and adjoint conditions) with $\mu_{p_2}$ and $\mu_{p_3}$ fixed at $0.1$ and $0$, respectively. The last two fold points are located within the feasible region. Taking $FP_3$ as $u_0$, both $FP_2$ and $FP_3$ correspond to branch points during continuation with $\mu_{p_2}$, $\mu_{p_3}$, $\nu_{p_1}$, and $\kappa_\mathrm{int}$ fixed at $0.1$, $0$, $0$, and $0$, respectively, along the first solution manifold in the subspace of vanishing Lagrange multipliers. Along each of the secondary manifolds emanating from ${FP}_2$ (bottom right) and ${FP}_3$ (bottom left), respectively, blue dots denote locations where $\nu_{J}=1$. Continuation along the tertiary manifolds with $\nu_J$, $\mu_{p_3}$, $\nu_{p_1}$, and $\kappa_\mathrm{int}$ fixed at $1$, $0$, $0$, and $0$, respectively, reaches a point with $\nu_{p_2}=0$ (cyan dot) in the bottom left panel, but terminates at a singular point (magenta squares labeled by MX) or at a point on the boundary of the computational domain before reaching $\nu_{p_2}$ in the bottom right panel. In the case of the bottom left panel, the terminal points (black dots) on the manifolds obtained from continuation with $\nu_J$, $\nu_{p_1}$, $\nu_{p_2}$, and $\kappa_\mathrm{int}$ fixed at $1$, $0$, $0$, and $0$, respectively, denote stationary points where $\nu_{p_3}=0$. The green dots denote points on the boundary of the computational domain.}
\label{fig:cont-path-feasible}
\end{figure}

\subsection{Optimal control}
Consider the problem of minimizing the objective functional
\begin{equation}
\label{eq:obj}
    J = \int_0^{2} \left(100\theta^2(t)+40\dot{\theta}^2(t)+u^2(t)\right) \mathrm{d}t
\end{equation}
on solutions of the nonlinear inverted pendulum dynamical system~\cite{invert_pend} shown in \cref{fig:invert_pend_model} and governed by the differential equations in terms of displacement $x$, rotation $\theta$ and input $u$
\begin{gather}
\label{eq:invert_pend_ode}
    (M+m)\ddot{x}-ml\dot{\theta}^2\sin\theta+ml\ddot{\theta}\cos\theta=u,\quad
    m\ddot{x}\cos\theta+ml\ddot{\theta}-mg\sin\theta=0,
\end{gather}
and initial conditions $\theta(0)=0.1$, $\dot{\theta}(0)=x(0)=\dot{x}(0)=0$, subject to integral inequality constraints of the form
\begin{equation}
\label{eq:input}
    \|u\|^2:= \int_0^{t_f} u^2(t) dt\leq E_c
\end{equation}
or
\begin{equation}
\label{eq:output}
    \|y\|^2:=\int_0^{t_f} (\theta^2(t)+x^2(t)) dt\leq Y_c,
\end{equation}
where $E_c$ and $Y_c$ are input and output thresholds, respectively. Following the scheme in~\cite{staged_adjoint}, we accommodate this optimal control problem within the proposed optimization framework by parameterizing the control input using a 10-term truncated Chebyshev-polynomial expansion and let the problem parameters $p_1,\ldots,p_{10}$ denote the unknown coefficients of the expansion. In the notation of previous sections, it follows that $d=10$ and $l=11$. We restrict attention throughout to variations in the elements of $\nu_p$ that approach $0$ monotonically. In all the numerical results reported here, $M=2$, $m=0.1$, $l=0.5$, and $g=9.81$.
\begin{figure}[ht!]
\centering
\includegraphics[width=0.45\textwidth]{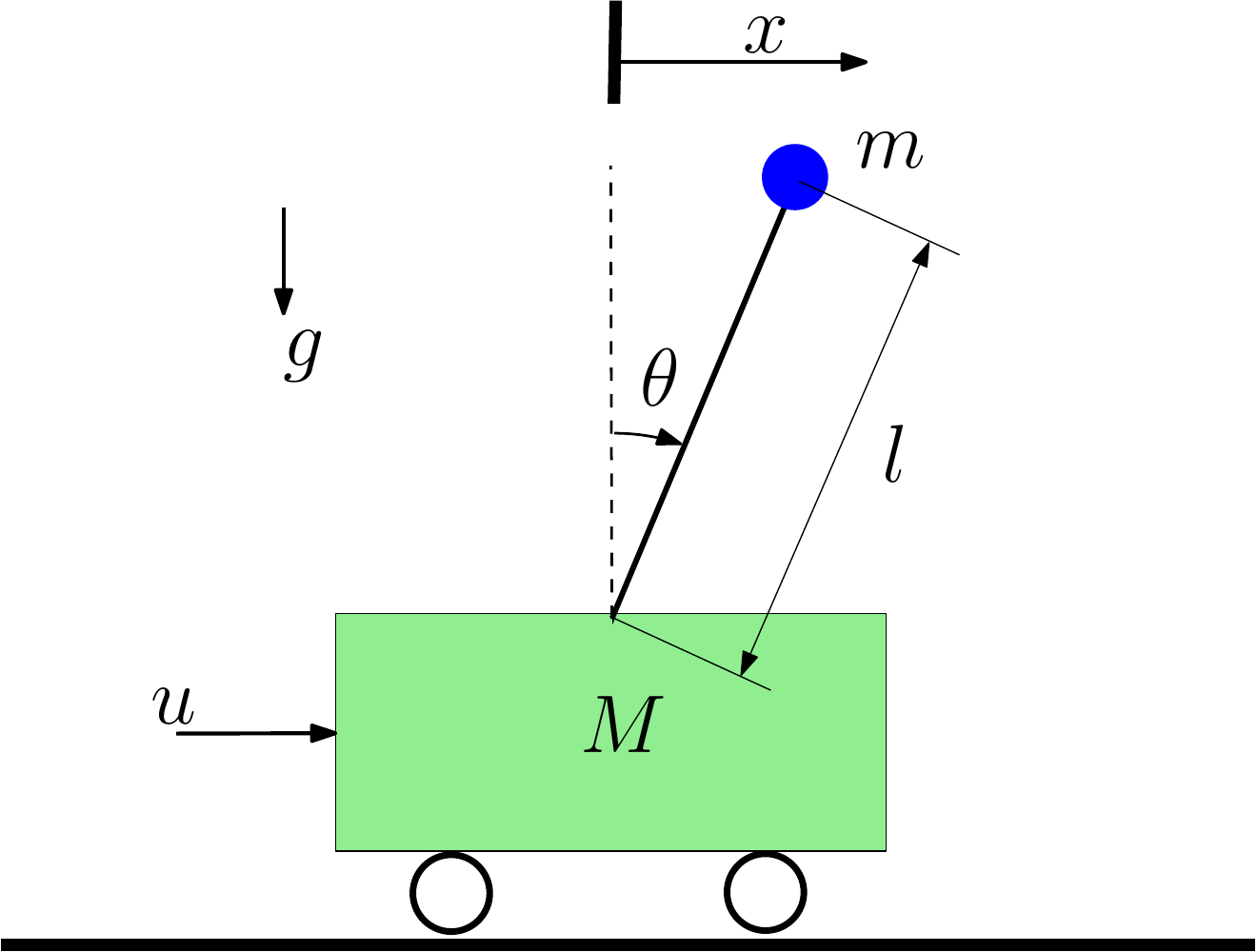}
\caption{Schematic of dynamical system corresponding to \eqref{eq:invert_pend_ode} (adapted from~\cite{invert_pend}).}
\label{fig:invert_pend_model}
\end{figure}

%
In order to explore the effects of boundedness of input and output, we first solve the optimal control problem in the absence of such bounds. It follows that $q=0$ and $\mathbb{P}=\emptyset$. We set $p_{1,0}=\cdots=p_{10,0}=0$ and construct the initial solution $x_0(t)$ via forward simulation. For this optimization problem, we follow the successive continuation approach by
\begin{enumerate}
\item taking $\mathbb{I}=\{3,...,11\}$, i.e., fixing $\mu_{p_{\{2,...,10\}}}$ and allowing $\mu_{p_1}$ vary to yield a one-dimensional manifold. A fold point of $\mu_J$ is detected along this manifold;
\item branching off from the fold point and driving $\nu_J$ to $1$; 
\item fixing $\nu_J=1$, and then successively allowing each of the remaining elements of $\mu_p$ to vary (in order of the expansion of $u(t)$), and fixing the corresponding element of $\nu_p$ once it equals $0$.
\end{enumerate}
The resulting optimal trajectories and control input are represented by solid lines in~\cref{fig:opt-traj} and~\cref{fig:opt-input}, respectively. For this optimal solution, the input integral $\|u\|^2=3.9457$, the output integral $\|y\|^2=3.7115\times 10^{-2}$, and $J=5.5759$.

\begin{figure}[h]
\centering
\includegraphics[width=3.0in]{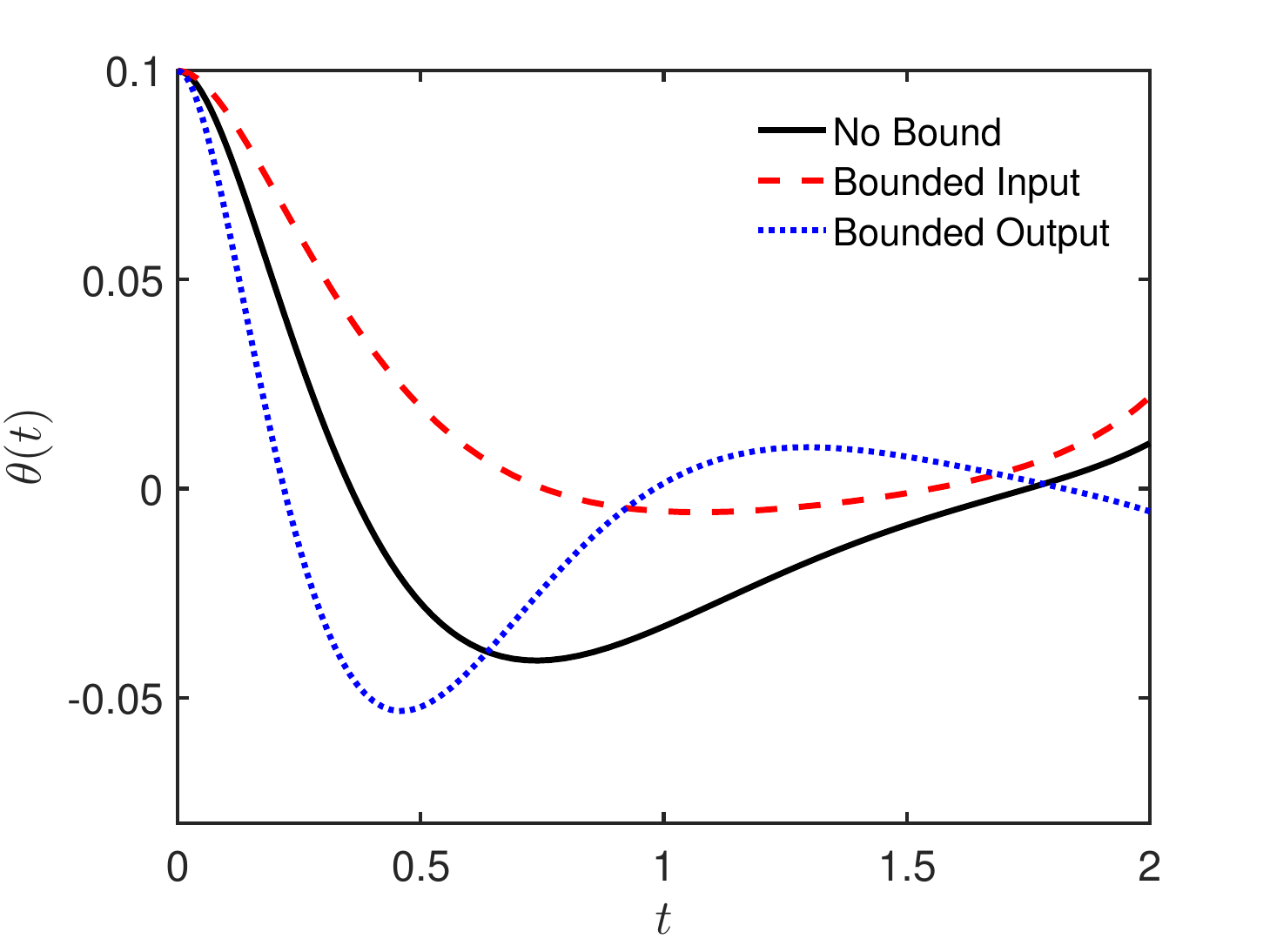}
\includegraphics[width=3.0in]{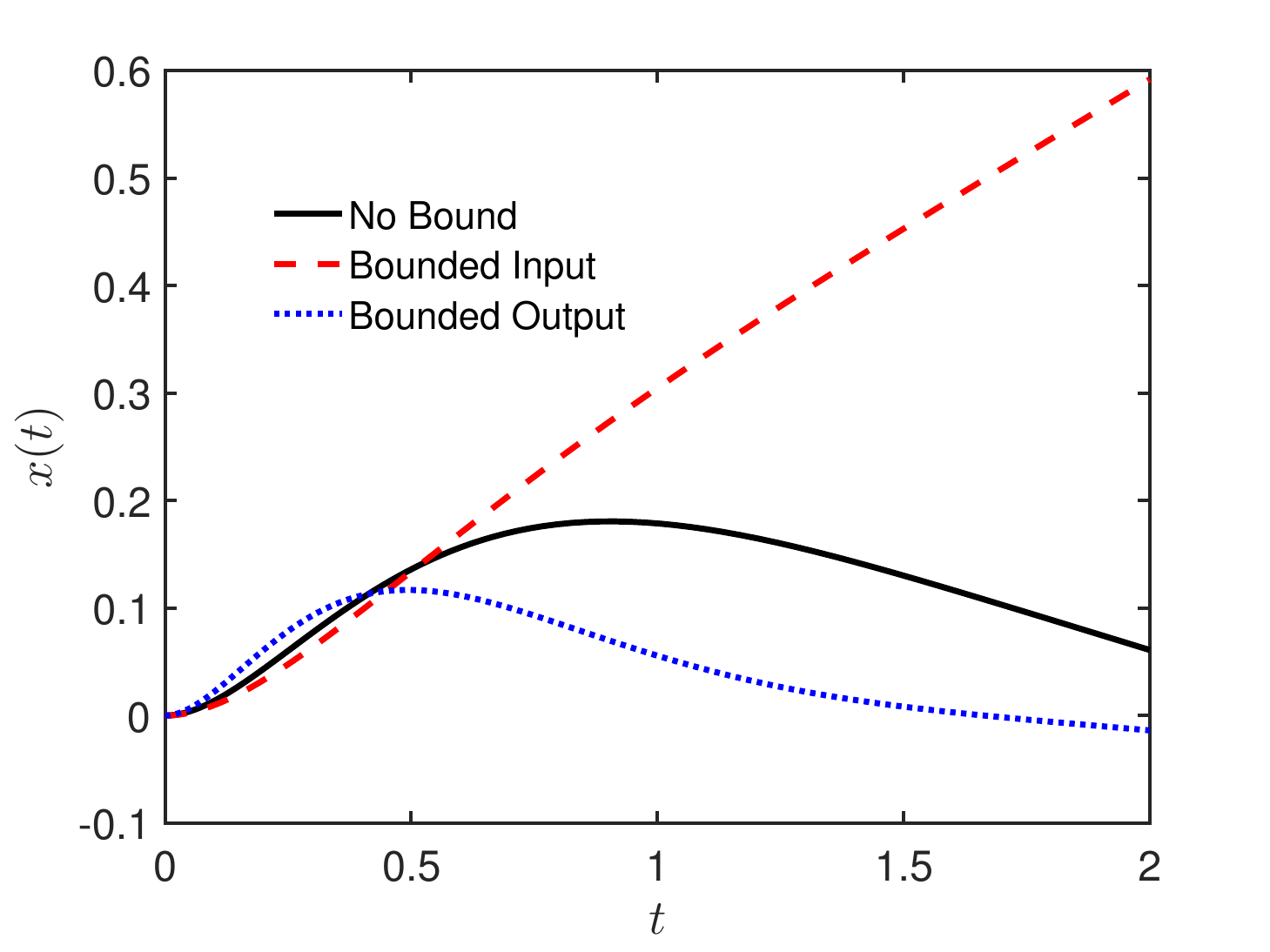}
\caption{Optimal time histories for $\theta(t)$ (left panel) and $x(t)$ (right panel) in the case without inequality constraints (solid lines), with input integral inequality (dashed lines), and with output integral inequality (dotted lines).}
\label{fig:opt-traj}
\end{figure}

\begin{figure}[ht!]
\centering
\includegraphics[width=0.6\textwidth]{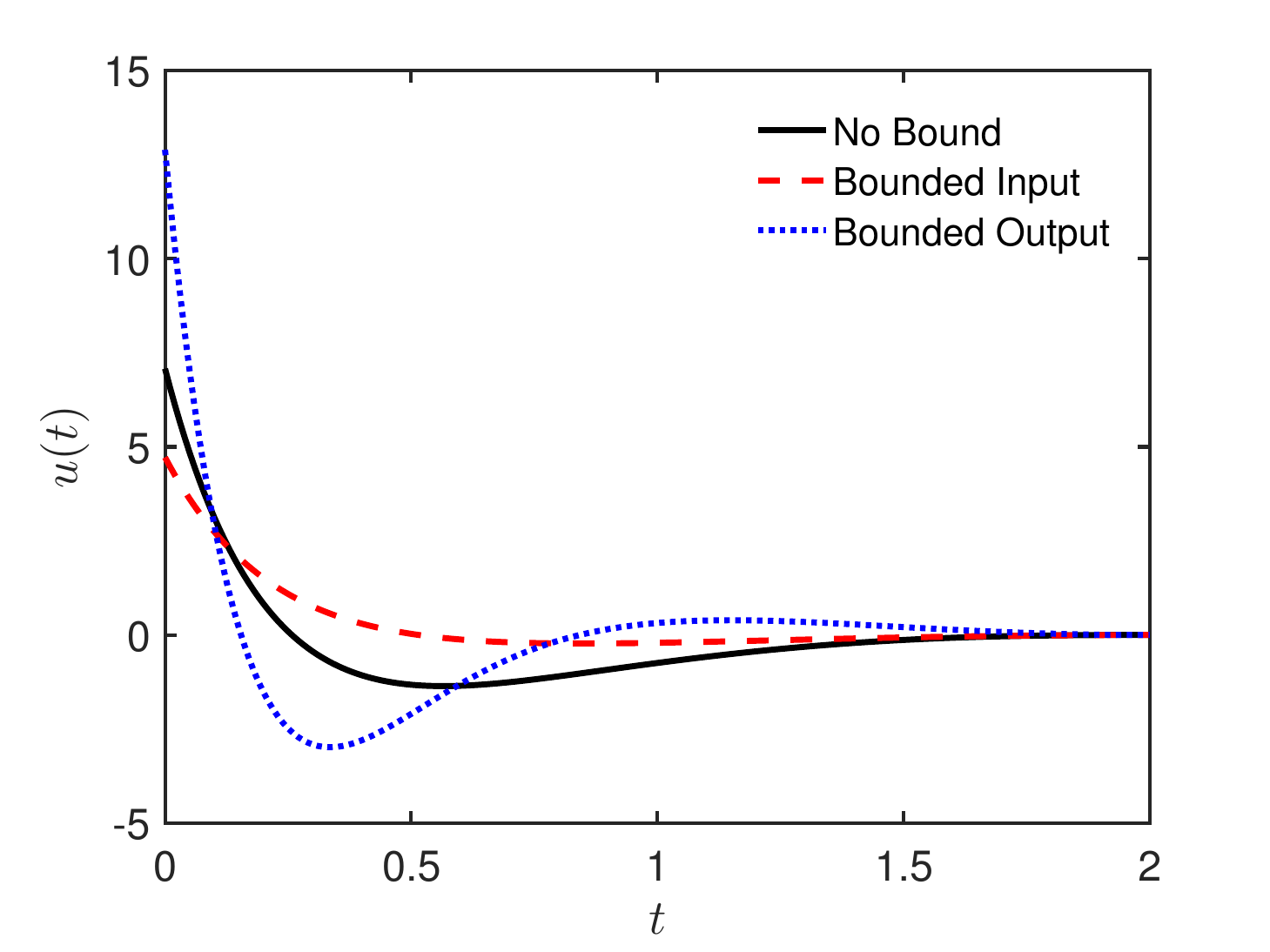}
\caption{Optimal time histories $u(t)$ for the control input in the case without inequality constraints (solid lines), with input integral inequality (dashed lines), and with output integral inequality (dotted lines).}
\label{fig:opt-input}
\end{figure}

We next consider optimization subject to the integral inequality~\eqref{eq:input} with $E_c=2$ to explore the effects of the boundedness of control input. Note that the integral inequality is formulated as an algebraic inequality in our framework because of the control parameterization. It follows that $q=1$. Let $\kappa_{\mathrm{input}}$ denotes the continuation parameter for the NCP condition of~\eqref{eq:input}. With initial parameters $p_1=p_2=3$, $p_3=p_4=1$ and $p_{\{5,\ldots,10\}}=0$, we construct an initial solution $x(t)$ located in te infeasible region by forward simulation. Since $\mathbb{P}=\{1\}$, the inactive problem parameter set $\mathbb{I}$ should be selected in such a way that $|\mathbb{I}|=d-1-|\mathbb{P}|=8$. To this end, we apply the successive continuation approach by
\begin{enumerate}
\item taking $\mathbb{I}=\{4,...,11\}$, i.e., fixing $\mu_{p_{\{3,...,10\}}}$ and $\kappa_\mathrm{input}$, and allowing $\mu_{p_1}$ and $\mu_{p_2}$ to vary to yield a one-dimensional manifold. Several fold points of $\mu_J$ are detected along this manifold;
\item branching off from the fold associated with the smallest value of $\mu_J$, and driving $\nu_J$ to $1$; 
\item fixing $\nu_J=1$, and then successively allowing each of the remaining elements of $\mu_p$ to vary (in order of the expansion of $u(t)$), and fixing the corresponding element of $\nu_p$ once it equals $0$;
\item driving $\kappa_\mathrm{input}$ to $0$.
\end{enumerate}
The resulting optimal trajectories and control input are represented by dashed lines in~\cref{fig:opt-traj} and~\cref{fig:opt-input}, respectively. With bounded control input, we observe that $x(t)$ oscillates with larger amplitude (see the right panel of~\cref{fig:opt-traj}) and the objective functional is increased to $J=11.774$.

We finally explore the effects of the boundedness of output by considering optimization subject to the integral inequality~\eqref{eq:output} with $Y_c=0.01$. With initial parameters $p=0$, we use forward simulation to construct an initial solution $x(t)$ that is again located in the infeasible region. Following the same approach as in the case of bounded input, we obtain the optimal trajectories and control input represented by dotted lines in~\cref{fig:opt-traj} and~\cref{fig:opt-input}, respectively. The bound on the output dampens the vibration of $x(t)$, as can be seen in the right panel of~\cref{fig:opt-traj}. For this optimal solution, we have $\|u(t)\|^2=8.8954$, indicating that more control input is required to ensure that the output stays within the given bound. As a consequence, the objective functional is increased to $J=9.4105$.

\section{Conclusions}
\label{sec:conclusions}
As advertised in the introduction, this paper has developed a rigorous framework within which the successive continuation paradigm for single-objective-function constrained optimization of Kern{\'e}vez and Doedel~\cite{DKK91} may be extended to the case of simultaneous equality and inequality constraints. The discussion has also shown that the structure of the corresponding continuation problems fits naturally with the \textsc{coco} construction paradigm. Indeed, a forthcoming release of \textsc{coco} will include documented support for this extended functionality. The finite- and infinite-dimensional examples illustrate the general methodology as well as number of opportunities for further development and automation.

In this study, the nonsmooth Fischer-Burmeister function was used to express complementarity constraints in the form of equalities. Notably, the singularity of this function at the origin was associated with singular points along various solution manifolds and a potential failure of the Newton solver to converge. Some generalized Newton methods have been developed to tackle such a singularity. One approach is to replace regular derivatives by Clarke subdifferentials~\cite{comp} or other generalized Jacobians~\cite{nonsmooth_newton}. An alternative approach is to approximate the nonsmooth problem by a family of smooth problems~\cite{comp,ncp}. More specifically, the NCP function $\chi$ may be approximated by a family of smooth approximants $\chi_{\epsilon}$, parameterized by the scalar $\epsilon$, such that the solutions to the perturbed problems $\chi_{\epsilon}=0$ form a smooth trajectory parameterized by $\epsilon$ that converges to the solution of $\chi=0$ as $\epsilon\to0$. Such a smoothing approach is analogous to the homotopy approach used in this paper to satisfy inequality constraints. The pesky singularity encountered in this study could thus be avoided by further expanding the successive continuation technique to a stage during which $\epsilon$ is driven to $0$.

It has been tacitly assumed that each successive stage of continuation is able to drive the appropriate continuation parameters to their desired values, preferably monotonically. But the examples showed that this may not be possible within a given computational domain, or may only be possible by occasionally bearing in a direction away from the desired values. Similar observations were already made in~\cite{staged_adjoint} and generalize in this paper to the relaxation parameters $\kappa$. Indeed, in the presence of inequality constraints, we observed instances in which solutions branches would terminate on singular points on zero-level surfaces of the inequality functions before the desired end points were reached. In some cases, the pseudo-arclength continuation algorithm automatically switched to a separate branch in such a zero-level surface allowing us to continue to drive a component of $\nu$ or $\kappa$ to its desired value. In other cases, this did not happen automatically, and we did not attempt to locate the secondary branch manually.

On a related note, we observe that in each successive stage of continuation implemented in the examples, only one component of $\nu$ or $\kappa$ at a time was driven to its desired value. Alternatively, one might imagine first driving the component of $\nu$ associated with the objective function to $1$ and then attempting to drive multiple remaining components of $\nu$ and $\kappa$ to $0$ simultaneously. Furthermore, although this did not happen in our examples, one imagines the possibility that continuation in a zero-level surface of an inequality function would terminate at a singular point (with the corresponding component of $\sigma$ equal to $0$). Further continuation might then need to branch off the zero-level surface in order to locate the desired stationary point. Clearly, any automated search for stationary points would need to consider these many possibilities.

Finally, we note that the approach in this paper is restricted to finite-dimensional inequality constraints and does not automatically generalize to the infinite-dimensional case. If the latter is discretized before the formulation of adjoints~\cite{flow,lauss-discrete}, then the present framework is again applicable. If, as advocated here and in~\cite{staged_adjoint}, the formulation of adjoints precedes discretization, an appropriate and consistent discretization scheme for the original equations, adjoint equations, and complementarity constraints needs to be carefully established~\cite{opt-control,nonsmooth_newton}. In either case, a high-dimensional vector $\kappa$ of relaxation parameters may result. Driving each component of $\kappa$ to zero one-by-one could be very time-consuming, so a smarter search strategy (as alluded to in the previous paragraph) would be desirable.


\appendix
\section{Essential lemmas}
\label{sec:essential lemmas}
Let $\mathcal{R}(L)$ and $\mathcal{N}(L)$ denote the range and nullspace of a linear map $L:V\mapsto W$. We say that $L$ is full rank if $V=V_1\oplus\mathcal{N}(L)$ and $L\big|_{V_1}$ is a bijection onto $W$. This holds, for example, if $\mathcal{R}(L)=W$ and $\mathrm{dim}\left(\mathcal{N}(L)\right)<\infty$. By the implicit function theorem, it follows that if $F(u_0)=0$ for a continuously Frechet-differentiable map $F$ between two Banach spaces $V$ and $W$, and if $DF(u_0)$ is full rank with finite-dimensional nullspace, then the roots of $F$ near $u_0$ lie on a manifold with tangent space spanned by $\mathcal{N}\left(DF(u_0)\right)$.

Recall the construction of the augmented function $F_\mathrm{aug}$ in \eqref{eq:aug}, the restriction $F_\mathrm{rest}$ obtained by fixing the values of $\mu_{\mathbb{I}}$, $\nu_{\mathbb{J}}$, and $\kappa$ to $\Psi_{\mathbb{I}}(u_0)$, $0$, and $\kappa_0$, respectively, and the reduced function $F_\mathrm{red}$ in \eqref{eq:red} obtained by retaining only entries corresponding to constraints on $u$ in $F_\mathrm{rest}=0$. Assume throughout that $F_\mathrm{red}(u_0)=0$ and, unless otherwise noted, that $\{i:G_i(u_0)=0\}=\emptyset$.

\begin{lemma}\label{lem1}
Suppose that the linear map $DF_\mathrm{red}(u_0)$ is full rank with one-dimensional nullspace. This also holds for $DF_\mathrm{rest}(u_0,0,0,0,\Psi_1(u_0),\Psi_{\mathbb{J}}(u_0),0,0)$ provided that $\left(D\Psi_1(u_0)\right)^\ast$ is linearly independent of $\left(DF_\mathrm{red}(u_0)\right)^*$. 
\end{lemma}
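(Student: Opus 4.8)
The plan is to compute the Fréchet derivative $DF_\mathrm{rest}$ at the stated root and to verify the two defining properties of full rank---surjectivity onto the codomain and a one-dimensional nullspace---by reducing both to the corresponding properties of $DF_\mathrm{red}(u_0)$. First I would exploit the fact that $\lambda,\eta,\sigma$ all vanish at the root, so that differentiating the adjoint block of \eqref{eq:aug} annihilates every term in which a derivative of an adjoint acts on a (zero) multiplier, leaving only $(D\Phi(u_0))^\ast\delta\lambda+(D\Psi(u_0))^\ast\delta\eta+(DG(u_0))^\ast\delta\sigma$. Next I would linearize the complementarity block $K(\sigma,-G(u))-\kappa_0$ using \eqref{eq:derivative-b}: since $\{i:G_i(u_0)=0\}=\emptyset$, the point $(0,-G_i(u_0))$ avoids the singularity of $\chi$, and the $i$th entry has derivative $\chi_a(0,-G_i(u_0))\,\delta\sigma_i-\chi_b(0,-G_i(u_0))\,DG_i(u_0)\delta u$, which is $-\delta\sigma_i$ for $i\in\mathbb{Z}$ (as $\chi_b=0$ there) and $-\delta\sigma_i+2\,DG_i(u_0)\delta u$ for $i\in\mathbb{P}$ (as $\chi_b=-2$ there). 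The crucial observation is that the blocks constraining $\delta u$---namely $D\Phi(u_0)\delta u$, $D\Psi_\mathbb{I}(u_0)\delta u$, and $2\,DG_\mathbb{P}(u_0)\delta u$---assemble precisely into $DF_\mathrm{red}(u_0)\delta u$, and correspondingly $(D\Phi(u_0))^\ast\delta\lambda+(D\Psi_\mathbb{I}(u_0))^\ast\delta\eta_\mathbb{I}+(DG_\mathbb{P}(u_0))^\ast\delta\sigma_\mathbb{P}=(DF_\mathrm{red}(u_0))^\ast(\delta\lambda,\delta\eta_\mathbb{I},\tfrac12\delta\sigma_\mathbb{P})$.

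For the nullspace I would set the linearization to zero. The $\eta_\mathbb{J}$ and $K_\mathbb{Z}$ blocks force $\delta\eta_\mathbb{J}=0$ and $\delta\sigma_\mathbb{Z}=0$, and the adjoint block becomes $(DF_\mathrm{red}(u_0))^\ast(\delta\lambda,\delta\eta_\mathbb{I},\tfrac12\delta\sigma_\mathbb{P})=-\delta\eta_1\,(D\Psi_1(u_0))^\ast$. Because $\mathcal{R}((DF_\mathrm{red}(u_0))^\ast)$ does not contain $(D\Psi_1(u_0))^\ast$ by the independence hypothesis, the scalar $\delta\eta_1$ must vanish; since $DF_\mathrm{red}(u_0)$ is surjective, its adjoint is injective, forcing $\delta\lambda=0$, $\delta\eta_\mathbb{I}=0$, $\delta\sigma_\mathbb{P}=0$. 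The $K_\mathbb{P}$ block then gives $2\,DG_\mathbb{P}(u_0)\delta u=0$, so together with $D\Phi(u_0)\delta u=0$ and $D\Psi_\mathbb{I}(u_0)\delta u=0$ we obtain $\delta u\in\mathcal{N}(DF_\mathrm{red}(u_0))$, a one-dimensional space; the slacks $\delta\mu_1,\delta\mu_\mathbb{J},\delta\nu_1,\delta\nu_\mathbb{I}$ are then determined by $\delta u$ and $\delta\eta$. Hence the nullspace is exactly one-dimensional.

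For surjectivity I would prescribe an arbitrary target and solve in an order that disentangles the only genuine coupling, that of $\delta\sigma_\mathbb{P}$ through both the adjoint and $K_\mathbb{P}$ blocks. The $\eta_\mathbb{J}$ and $K_\mathbb{Z}$ blocks and the four slack blocks $\Psi_1-\mu_1$, $\Psi_\mathbb{J}-\mu_\mathbb{J}$, $\eta_1-\nu_1$, $\eta_\mathbb{I}-\nu_\mathbb{I}$ are each hit by choosing $\delta\eta_\mathbb{J}$, $\delta\sigma_\mathbb{Z}$ and the slack increments last, so it suffices to realize the $\Phi$, $\Psi_\mathbb{I}$, $K_\mathbb{P}$ targets and the adjoint target $\tilde c\in U^\ast$. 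After substituting the already-chosen $\delta\eta_\mathbb{J}$ and $\delta\sigma_\mathbb{Z}$, the adjoint block contains no $\delta u$ and reads $(DF_\mathrm{red}(u_0))^\ast(\delta\lambda,\delta\eta_\mathbb{I},\tfrac12\delta\sigma_\mathbb{P})+(D\Psi_1(u_0))^\ast\delta\eta_1=\tilde c$. Since $\mathcal{R}((DF_\mathrm{red}(u_0))^\ast)=\mathcal{N}(DF_\mathrm{red}(u_0))^\perp$ is closed of codimension one (by the closed-range theorem and finiteness of the kernel) and $(D\Psi_1(u_0))^\ast$ lies outside it, this operator is onto $U^\ast$; solving it fixes $\delta\sigma_\mathbb{P}$. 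Finally I would choose $\delta u$ with $DF_\mathrm{red}(u_0)\delta u=(a,b_\mathbb{I},\delta\sigma_\mathbb{P}+e_\mathbb{P})$, possible by surjectivity of $DF_\mathrm{red}(u_0)$, which simultaneously meets the $\Phi$, $\Psi_\mathbb{I}$, and $K_\mathbb{P}$ targets. As the kernel is finite-dimensional, surjectivity yields full rank by the criterion recalled at the start of this appendix.

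The main obstacle I anticipate is the surjectivity of the adjoint block onto the possibly infinite-dimensional dual $U^\ast$: this is exactly where the hypothesis that $(D\Psi_1(u_0))^\ast$ is independent of $(DF_\mathrm{red}(u_0))^\ast$ is indispensable, and it must be combined with the closed-range theorem to conclude that adjoining a single direction to the codimension-one subspace $\mathcal{N}(DF_\mathrm{red}(u_0))^\perp$ recovers all of $U^\ast$. The secondary difficulty is organizational---resolving the coupling of $\delta\sigma_\mathbb{P}$ across the two nonlinear blocks by solving the $\delta u$-free adjoint block \emph{before} fixing $\delta u$; the remaining work, chiefly the linearization of the Fischer-Burmeister function via \eqref{eq:derivative-b}, is routine.
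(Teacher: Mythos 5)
Your proof is correct and follows essentially the same route as the paper: both arguments rest on the Fredholm/closed-range fact that $\mathcal{R}\left(\left(DF_\mathrm{red}(u_0)\right)^*\right)$ is closed of codimension one with trivial kernel, and on the independence hypothesis upgrading this to $U^*=\mathcal{R}\left(\left(D\Psi_1(u_0)\right)^*\right)\oplus\mathcal{R}\left(\left(DF_\mathrm{red}(u_0)\right)^*\right)$, after which full rank of $DF_\mathrm{rest}$ is read off from its block structure. You simply carry out explicitly (linearization of the Fischer--Burmeister blocks, identification of the $DF_\mathrm{red}$ sub-blocks, separate nullspace and surjectivity checks) what the paper compresses into ``the claim then follows by inspection of the image.''
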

\begin{proof}
By assumption, the inverse image of every $w\in Y\times\mathbb{R}^{|\mathbb{I}|}\times\mathbb{R}^{|\mathbb{P}|}$ is a one-dimensional affine subspace of $U$ of the form $v\oplus \mathcal{N}\left(DF_\mathrm{red}(u_0)\right)$ for some $v$. By the theory of Fredholm operators, it follows that $\mathcal{N}\left(\left(DF_\mathrm{red}(u_0)\right)^\ast\right)=0$ and there exists a one-dimensional subspace $\Sigma$ of $U^\ast$, such that $U^\ast=\Sigma\oplus\mathcal{R}\left(\left(DF_\mathrm{red}(u_0)\right)^\ast\right)$. If $\left(D\Psi_1(u_0)\right)^\ast$ is linearly independent of $\left(DF_\mathrm{red}(u_0)\right)^*$, it follows that $U^\ast=\mathcal{R}\left(\left(D\Psi_1(u_0)\right)^\ast\right)\oplus\mathcal{R}\left(\left(DF_\mathrm{red}(u_0)\right)^\ast\right)$. The claim then follows by inspection of the image of the linear map $DF_\mathrm{rest}(u_0,0,0,0,\Psi_1(u_0),\Psi_{\mathbb{J}}(u_0),0,0)$.
\end{proof}
\begin{corollary}\label{cor1}
Suppose that the linear map $DF_\mathrm{red}(u_0)$ is full rank with one-dimensional nullspace and that $\left(D\Psi_1(u_0)\right)^\ast$ is linearly independent of $\left(DF_\mathrm{red}(u_0)\right)^*$. It follows that the roots of $F_\mathrm{rest}$ sufficiently close to $(u_0,0,0,0,\Psi_1(u_0),\Psi_{\mathbb{J}}(u_0),0,0)$ lie on a one-dimensional manifold of points of the form $(u,0,0,0,\Psi_1(u),\Psi_{\mathbb{J}}(u),0,0)$ for some root $u$ of $F_\mathrm{red}$.
\end{corollary}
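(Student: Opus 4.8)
The plan is to combine Lemma~\ref{lem1} with the implicit function theorem recorded at the start of this appendix and then to pin down the shape of the resulting manifold by a dimension count. First I would invoke Lemma~\ref{lem1}: under the stated hypotheses, the linear map $DF_\mathrm{rest}(u_0,0,0,0,\Psi_1(u_0),\Psi_{\mathbb{J}}(u_0),0,0)$ is full rank with one-dimensional nullspace. The implicit function theorem then guarantees that all roots of $F_\mathrm{rest}$ in a neighborhood of this base point lie on a locally unique one-dimensional manifold $M$ whose tangent space at the base point equals $\mathcal{N}\big(DF_\mathrm{rest}(u_0,0,0,0,\Psi_1(u_0),\Psi_{\mathbb{J}}(u_0),0,0)\big)$.

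Next I would exhibit an explicit one-dimensional manifold of roots of the claimed form. Since $DF_\mathrm{red}(u_0)$ is full rank with one-dimensional nullspace, the implicit function theorem applied to $F_\mathrm{red}$ shows that the roots $u\approx u_0$ of $F_\mathrm{red}$ form a one-dimensional manifold through $u_0$. As observed in the construction of $F_\mathrm{red}$ in \eqref{eq:red} (a fact verified by substitution into \eqref{eq:aug}, using continuity of $G$ to handle the indices in $\mathbb{Z}$), the smooth map $u\mapsto(u,0,0,0,\Psi_1(u),\Psi_{\mathbb{J}}(u),0,0)$ carries each such root to a root of $F_\mathrm{rest}$; this map is an embedding because its first component is already the identity on $u$. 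Its image $N$ is therefore a one-dimensional manifold of $F_\mathrm{rest}$-roots through the base point, and by construction $N\subseteq M$.

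Finally I would match tangent lines to force $N=M$ locally. Because $F_\mathrm{rest}$ vanishes identically on $N$, differentiating along $N$ shows that the tangent to $N$ at the base point lies in $\mathcal{N}(DF_\mathrm{rest})$, which is one-dimensional by Lemma~\ref{lem1}; since $N$ is itself one-dimensional, its tangent line \emph{equals} this nullspace and hence coincides with the tangent line of $M$. Two one-dimensional manifolds sharing a point and a tangent line, with one contained in the other, agree on a neighborhood of that point; equivalently, $N$ is relatively open in $M$, and the local uniqueness in the implicit function theorem forces $M=N$ near the base point. Consequently every root of $F_\mathrm{rest}$ sufficiently close to the base point has the form $(u,0,0,0,\Psi_1(u),\Psi_{\mathbb{J}}(u),0,0)$ for a root $u$ of $F_\mathrm{red}$, as claimed.

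I expect the only delicate point to be this last step: justifying that containment of one one-dimensional manifold in another of the same dimension forces local equality, rather than $N$ being a lower-dimensional or improperly embedded subset. Matching the tangent line through Lemma~\ref{lem1}'s count $\dim\mathcal{N}(DF_\mathrm{rest})=1$ is exactly what rules this out and makes the coincidence rigorous; the construction of the embedding and the two invocations of the implicit function theorem are otherwise routine.
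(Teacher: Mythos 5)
Your proof is correct and takes essentially the same route as the paper, which states Corollary~\ref{cor1} without separate proof as an immediate consequence of Lemma~\ref{lem1} combined with the earlier observation (via continuity of $G$ on the indices in $\mathbb{Z}$) that every root $u\approx u_0$ of $F_\mathrm{red}$ yields a root $(u,0,0,0,\Psi_1(u),\Psi_{\mathbb{J}}(u),0,0)$ of $F_\mathrm{rest}$, so the two one-dimensional manifolds must coincide locally. Your tangent-matching step simply makes explicit the local-uniqueness argument that the paper leaves implicit.
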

\begin{lemma}\label{lem2}
Suppose that the linear map $DF_\mathrm{red}(u_0)$ is full rank with one-dimensional nullspace and that $u_0$ is a stationary point of $\Psi_1$ along the corresponding one-dimensional solution manifold. Then, generically, $(u_0,0,0,0,\Psi_1(u_0),\Psi_{\mathbb{J}}(u_0),0,0)$ is a branch point of $F_\mathrm{rest}$ through which runs a secondary one-dimensional solution manifold, locally parameterized by $\eta_1$, along which $\lambda$, $\eta_\mathbb{I}$, and $\sigma_\mathbb{P}$ vary.
\end{lemma}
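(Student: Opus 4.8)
The plan is to show that $P_0:=(u_0,0,0,0,\Psi_1(u_0),\Psi_\mathbb{J}(u_0),0,0)$ is a simple branch point of $F_\mathrm{rest}$: I will verify that $DF_\mathrm{rest}(P_0)$ has a two-dimensional nullspace and a one-dimensional cokernel, and then reduce the branching equation to a homogeneous quadratic whose factorization exhibits the secondary branch.

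First I would linearize $F_\mathrm{rest}$ at $P_0$. Since every multiplier vanishes at $P_0$, the $u$-derivative of the adjoint block (which carries the second derivatives of $\Phi,\Psi,G$ contracted against $\lambda,\eta,\sigma$) drops out, leaving the linearized adjoint equation $(D\Phi(u_0))^\ast\,d\lambda+(D\Psi(u_0))^\ast\,d\eta+(DG(u_0))^\ast\,d\sigma=0$. Using $\chi_a(0,b)=-1$ and $\chi_b(0,b)=\sgn(b)-1$ from \eqref{eq:derivative-b}, the linearized complementarity equations give $d\sigma_\mathbb{Z}=0$ and $d\sigma_i=2\,DG_i(u_0)\,du$ for $i\in\mathbb{P}$, so that together with $d\eta_\mathbb{J}=0$ the $u$-constraints reassemble $DF_\mathrm{red}(u_0)\,du$. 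The stationarity hypothesis says $D\Psi_1(u_0)$ annihilates $\mathcal{N}(DF_\mathrm{red}(u_0))=\operatorname{span}(v)$, which by the closed-range Fredholm structure already used in \cref{lem1} is equivalent to $(D\Psi_1(u_0))^\ast\in\mathcal{R}((DF_\mathrm{red}(u_0))^\ast)$; by injectivity of $(DF_\mathrm{red}(u_0))^\ast$ there is then a unique triple with $(D\Psi_1(u_0))^\ast=(D\Phi(u_0))^\ast\lambda^\ast+(D\Psi_\mathbb{I}(u_0))^\ast a^\ast+2(DG_\mathbb{P}(u_0))^\ast c^\ast$. Substituting this identity into the linearized adjoint equation and again using injectivity shows the nullspace is exactly two-dimensional, spanned by the \emph{primary} direction $\phi_p$ (with $du=v$ and all multipliers zero) and a \emph{secondary} direction $\phi_s$ (with $d\eta_1=1$, $d\lambda=-\lambda^\ast$, $d\eta_\mathbb{I}=-a^\ast$, $d\sigma_\mathbb{P}=-2c^\ast$, and $du=du_s$ any solution of $DF_\mathrm{red}(u_0)\,du_s=(0,0,-2c^\ast)$, determined up to $v$).

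Next I would establish that $\mathcal{R}(DF_\mathrm{rest}(P_0))$ has codimension one. \Cref{lem1} shows $DF_\mathrm{rest}$ is full rank with one-dimensional nullspace at nearby non-stationary base points, so it is Fredholm of index one there; since the index is locally constant and $DF_\mathrm{rest}(P_0)$ inherits the same Fredholm structure, its index is also one, and the two-dimensional nullspace forces a one-dimensional cokernel spanned by some functional $\psi$. This is exactly the data of a simple branch point. I would then perform a Lyapunov--Schmidt reduction along $\phi_p,\phi_s$, projecting $F_\mathrm{rest}=0$ onto $\psi$, to obtain a scalar branching equation whose leading part is the quadratic $Q(\alpha,\beta)=\langle\psi,\,D^2F_\mathrm{rest}(P_0)[\alpha\phi_p+\beta\phi_s,\,\alpha\phi_p+\beta\phi_s]\rangle$ in the kernel coordinates $(\alpha,\beta)$. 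The crucial simplification is that the trivial-multiplier manifold $\{(u,0,0,0,\Psi_1(u),\Psi_\mathbb{J}(u),0,0):F_\mathrm{red}(u)=0\}$ is already a genuine solution branch through $P_0$ with tangent $\phi_p$; hence $Q(\alpha,0)\equiv0$, so $Q$ has no $\alpha^2$ term and factors as $Q(\alpha,\beta)=\beta(c_1\alpha+c_2\beta)$. The factor $\beta=0$ recovers the primary branch, while $c_1\alpha+c_2\beta=0$ supplies the \emph{secondary} branch, along which $\beta=d\eta_1\ne0$ (so it is parameterized by $\eta_1$) and $\lambda,\eta_\mathbb{I},\sigma_\mathbb{P}$ vary in the directions prescribed by $\phi_s$, as claimed.

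The hard part, and the origin of the qualifier \emph{generically}, is the nondegeneracy $c_1\ne0$: if $c_1=0$ then $Q=c_2\beta^2$ and the putative secondary branch collapses onto the primary one. Verifying $c_1\ne0$ amounts to evaluating the mixed pairing $\langle\psi,D^2F_\mathrm{rest}(P_0)[\phi_p,\phi_s]\rangle$, which couples the Hessians of $\Phi,\Psi,G$ (contracted against $v$ and the multiplier directions $-\lambda^\ast,-a^\ast,-2c^\ast$) with the Hessian of $\chi$ in the $\mathbb{P}$-block; the computation is routine but lengthy, and its non-vanishing is a single scalar condition, hence holds generically in the problem data. A secondary technical point is the careful bookkeeping of closed range and Fredholm index in the Banach setting, needed to guarantee that $\psi$ is well defined.
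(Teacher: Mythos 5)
Your proposal is essentially correct, but it takes a genuinely different route from the paper. The paper's proof is constructive rather than bifurcation-theoretic: after extracting the vector $z$ with $(D\Psi_1(u_0))^\ast=(DF_\mathrm{red}(u_0))^\ast z$, it asserts the two-dimensional nullspace only ``by inspection'' and then builds the secondary branch by hand --- for each fixed $\|\sigma_\mathbb{P}\|=\epsilon\ll1$, the first three equations of \eqref{lemeq} define a one-dimensional solution manifold which generically carries a unique stationary point of $\Psi_1$ near $u_0$; at that point the adjoint equation is solved so that $(-\lambda/\eta_1,-\eta_\mathbb{I}/\eta_1,-\sigma_\mathbb{P}/\eta_1)$ is close to $z$, a continuity/matching argument in $\sigma_\mathbb{P}$ yields a solution for some small $\eta_1>0$, and varying $\epsilon$ sweeps out the branch. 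Genericity there means persistence and uniqueness of the stationary point on the perturbed manifolds. You instead run the standard simple-branch-point machinery: an explicit two-dimensional kernel (your computation of $\phi_p$ and $\phi_s$, including the linearized complementarity relations $d\sigma_\mathbb{Z}=0$ and $d\sigma_\mathbb{P}=2DG_\mathbb{P}(u_0)\,du$, is correct and is a genuine sharpening of the paper's ``by inspection''), a one-dimensional cokernel, Lyapunov--Schmidt reduction, the factorization $Q(\alpha,\beta)=\beta(c_1\alpha+c_2\beta)$ forced by the known trivial branch, and the implicit function theorem applied to the cofactor. Your route buys an explicit tangent direction for the secondary branch (which is what numerical branch switching actually uses), a precise meaning for ``generically'' (namely $c_1\neq0$), and \cref{cor2} as an immediate byproduct of your formula for $du_s$; the paper's route avoids all cokernel and index bookkeeping, and its argument template is reused verbatim for \cref{lem3,lem4}, giving a uniform treatment of the three scenarios.

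One step of yours needs repair: the Fredholm index argument is circular as written. Local constancy of the index transports index one from the nearby regular points covered by \cref{lem1} to the branch point only if you already know that the linearization of $F_\mathrm{rest}$ at the branch point is Fredholm --- which is precisely what ``inherits the same Fredholm structure'' assumes. The fix is direct and cheap: by stationarity, $\left(D\Psi_1(u_0)\right)^\ast\in\mathcal{R}\left(\left(DF_\mathrm{red}(u_0)\right)^\ast\right)$, so the range of the linearized adjoint row equals $\mathcal{R}\left(\left(DF_\mathrm{red}(u_0)\right)^\ast\right)$, which by the Fredholm structure invoked in the proof of \cref{lem1} is closed and of codimension one in $U^\ast$ (with complement $\Sigma$); all remaining rows of the linearization are surjective onto their targets. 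Hence the range of the linearization at the branch point is closed with a one-dimensional complement, and your functional $\psi$ is well defined without any index-continuation argument. With that replacement, and accepting that the nondegeneracy $c_1\neq0$ is left as the generic hypothesis --- which is fair, and no vaguer than the paper's own appeal to genericity --- your proof goes through.
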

\begin{proof}
By assumption, there exists a unique vector $z\in \left(Y\times\mathbb{R}^{|\mathbb{I}|}\times\mathbb{R}^{|\mathbb{P}|}\right)^\ast$ such that $\left(D\Psi_1(u_0)\right)^\ast=\left(DF_\mathrm{red}(u_0)\right)^*z$. It follows by inspection of its image that the linear map $DF_\mathrm{rest}(u_0,0,0,0,\Psi_1(u_0),\Psi_{\mathbb{J}}(u_0),0,0)$ has a two-dimensional nullspace and is no longer full rank. Indeed, for $u\approx u_0$, roots of $F_\mathrm{rest}$ correspond to solutions to the system of equations
\begin{equation}\label{lemeq}
\left\{\begin{array}{c}\Phi(u)=0,\\\Psi_\mathbb{I}(u)-\Psi_\mathbb{I}(u_0)=0,\\K_\mathbb{P}\left(\sigma,-G(u)\right)-\kappa_{0,\mathbb{P}}=0,\\\left(D\Phi(u)\right)^\ast\lambda+\left(D\Psi_1(u)\right)^\ast\eta_1+\left(D\Psi_\mathbb{I}(u)\right)^\ast\eta_\mathbb{I}+\left(DG_\mathbb{P}(u)\right)^\ast\sigma_\mathbb{P}=0.
\end{array}\right.
\end{equation}
For every $\sigma_\mathbb{P}$ with $\|\sigma_\mathbb{P}\|=\epsilon\ll 1$, there exists a one-dimensional manifold of solutions to the first three equations. By continuity, each such manifold generically contains a unique stationary point of $\Psi_1$ close to $u_0$. At each such point, the fourth equation may be solved for $\lambda$, $\eta_\mathbb{I}$, and $\sigma_\mathbb{P}$ for given $\eta_1>0$ such that the vector $(-\lambda/\eta_1,-\eta_\mathbb{I}/\eta_1,-\sigma_\mathbb{P}/\eta_1)$ is close to $z$. It follows that there exists a least one such point where the two values of the vector $\sigma_\mathbb{P}$ agree for some $0<\eta_1\ll 1$. The claim follows by considering variations in $\epsilon$.
\end{proof}
\begin{corollary}\label{cor2}
Under the assumptions of Lemma~\ref{lem2}, $u$ varies along the secondary branch only if $\mathbb{P}\neq\emptyset$.
\end{corollary}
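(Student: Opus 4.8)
The plan is to prove the contrapositive: assuming $\mathbb{P}=\emptyset$, I will show that the $u$-component is pinned to $u_0$ along the entire secondary branch, so that only $\lambda$ and $\eta_\mathbb{I}$ (together with the parameter $\eta_1$) vary. I would work directly from the characterization of roots of $F_\mathrm{rest}$ near the branch point given by the system \eqref{lemeq} in the proof of Lemma~\ref{lem2}.

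First I would observe that when $\mathbb{P}=\emptyset$ the third equation of \eqref{lemeq}, namely $K_\mathbb{P}(\sigma,-G(u))-\kappa_{0,\mathbb{P}}=0$, is vacuous, and the adjoint (fourth) equation loses its $\left(DG_\mathbb{P}(u)\right)^\ast\sigma_\mathbb{P}$ contribution. Consequently the first three equations collapse to exactly the reduced problem $F_\mathrm{red}(u)=0$ of \eqref{eq:red}, which contains no reference to $\sigma$ or to the branch parameter $\eta_1$. Thus, irrespective of where one sits on the secondary branch, the $u$-component is constrained to the single, fixed one-dimensional solution manifold $M:=\{u\approx u_0:F_\mathrm{red}(u)=0\}$ guaranteed by the full-rank hypothesis of Lemma~\ref{lem2}.

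Next I would use the stationarity hypothesis together with the adjoint equation. With $\mathbb{P}=\emptyset$, the fourth equation of \eqref{lemeq} reads $\left(D\Phi(u)\right)^\ast\lambda+\left(D\Psi_1(u)\right)^\ast\eta_1+\left(D\Psi_\mathbb{I}(u)\right)^\ast\eta_\mathbb{I}=0$; for $\eta_1\neq 0$ this forces $\left(D\Psi_1(u)\right)^\ast$ to lie in $\mathcal{R}\left(\left(DF_\mathrm{red}(u)\right)^\ast\right)$, i.e.\ $u$ must be a stationary point of $\Psi_1$ restricted to $M$. By the nondegeneracy invoked (generically) in Lemma~\ref{lem2}, $u_0$ is an \emph{isolated} such stationary point on $M$, so every nearby point of the branch has $u=u_0$. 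The adjoint equation then becomes linear and homogeneous in $(\lambda,\eta_\mathbb{I})$ with coefficients frozen at $u_0$, and its solution scales with $\eta_1$; this is precisely the secondary branch, along which $u$ is constant.

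The main obstacle I anticipate is making the isolation of the critical point rigorous: I must argue that the restriction of $\Psi_1$ to the fixed one-dimensional manifold $M$ has a nondegenerate, hence locally unique, critical point at $u_0$, which is exactly the genericity already assumed in Lemma~\ref{lem2}; with that in hand the pinning of $u$ is immediate. For the converse direction, which explains the ``only if'', I would note that when $\mathbb{P}\neq\emptyset$ the retained equation $K_\mathbb{P}(\sigma,-G(u))=\kappa_{0,\mathbb{P}}$ depends genuinely on $\sigma_\mathbb{P}$ through the relation \eqref{chiform}; as $\sigma_\mathbb{P}$ moves away from $0$ along the branch, the associated constraint surface deforms, the stationary point of $\Psi_1$ migrates off $u_0$, and hence $u$ varies.
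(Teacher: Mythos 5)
Your proof is correct and follows essentially the same route as the paper, which leaves Corollary~\ref{cor2} implicit in the construction used to prove Lemma~\ref{lem2}: with $\mathbb{P}=\emptyset$ the constraints on $u$ in \eqref{lemeq} collapse to the fixed problem $F_\mathrm{red}(u)=0$, the adjoint equation with $\eta_1\neq 0$ forces $u$ to be a (generically locally unique) stationary point of $\Psi_1$ on that fixed manifold, hence $u\equiv u_0$ and only the multipliers scale with $\eta_1$. Your contrapositive formulation and the closing remark about the deforming constraint surface when $\mathbb{P}\neq\emptyset$ match the paper's reasoning; there is no gap.
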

\begin{lemma}\label{lem3}
Suppose that the linear map $DF_\mathrm{red}(u_0)$ is a bijection, i.e., that $u_0$ is a locally unique root of $F_\mathrm{red}$. Then, the point $(u_0,0,0,0,\Psi_1(u_0),\Psi_{\mathbb{J}}(u_0),0,0)$ lies on a one-dimensional manifold of solutions to $F_\mathrm{rest}=0$ locally parameterized by $\eta_1$, along which $\lambda$, $\eta_\mathbb{I}$, and $\sigma_\mathbb{P}$ vary.
\end{lemma}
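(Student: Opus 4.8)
The plan is to reduce $F_\mathrm{rest}=0$ near the base point to the same core system \eqref{lemeq} that appears in the proof of Lemma~\ref{lem2}, and then to show directly that the linearization $DF_\mathrm{rest}$ at the base point $(u_0,0,0,0,\Psi_1(u_0),\Psi_{\mathbb{J}}(u_0),0,0)$ is full rank with a one-dimensional nullspace whose $\eta_1$-component is nonzero; the conclusion then follows from the implicit-function-theorem statement recorded at the start of this appendix. The reduction is identical to that in Lemma~\ref{lem2}: the equations $\Psi(u)=\mu$ and $\eta=\nu$ eliminate $\mu$ and $\nu$ and, together with $\nu_{\mathbb{J}}=0$, force $\eta_{\mathbb{J}}=0$, while for each $i\in\mathbb{Z}$ the relation $\chi(\sigma_i,-G_i(u))=0$ with $G_i(u)<0$ forces $\sigma_i=0$ and is removable by the implicit function theorem, since $\chi_a(0,-G_i(u_0))=-1\neq0$ by \eqref{eq:derivative-b} while $\chi_b(0,-G_i(u_0))=0$ there makes the derivative in $u$ vanish. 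What remains is exactly \eqref{lemeq} in the unknowns $(u,\lambda,\eta_1,\eta_{\mathbb{I}},\sigma_{\mathbb{P}})$, with $\eta_1$ playing the role of the parameter along the sought manifold.

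The key step is to compute $DF_\mathrm{rest}$ at the base point and to recognize its block structure. Because $\lambda$, $\eta$, and $\sigma$ all vanish there, every term obtained by differentiating the adjoint operators in the direction $\delta u$ is multiplied by a vanishing multiplier and drops out, so the linearized adjoint equation is simply $(D\Phi(u_0))^\ast\delta\lambda+(D\Psi_1(u_0))^\ast\delta\eta_1+(D\Psi_{\mathbb{I}}(u_0))^\ast\delta\eta_{\mathbb{I}}+(DG_{\mathbb{P}}(u_0))^\ast\delta\sigma_{\mathbb{P}}$. For the complementarity rows, \eqref{eq:phi_ab}--\eqref{eq:derivative-b} give, for $i\in\mathbb{P}$ where $-G_i(u_0)<0$, the values $\chi_a(0,-G_i(u_0))=-1$ and $\chi_b(0,-G_i(u_0))=-2$, so the linearized row reads $2\,DG_{\mathbb{P}}(u_0)\,\delta u-\delta\sigma_{\mathbb{P}}$. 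Writing $A:=DF_\mathrm{red}(u_0)=\big(D\Phi(u_0),\,D\Psi_{\mathbb{I}}(u_0),\,2\,DG_{\mathbb{P}}(u_0)\big)$, the three primal rows are $A\,\delta u-(0,0,\delta\sigma_{\mathbb{P}})$, and since $(DG_{\mathbb{P}}(u_0))^\ast\delta\sigma_{\mathbb{P}}=2(DG_{\mathbb{P}}(u_0))^\ast(\delta\sigma_{\mathbb{P}}/2)$, the adjoint row equals $A^\ast(\delta\lambda,\delta\eta_{\mathbb{I}},\delta\sigma_{\mathbb{P}}/2)+(D\Psi_1(u_0))^\ast\delta\eta_1$. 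The linearization thus has the saddle-point form built from $A$ and $A^\ast$.

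By hypothesis $A$ is a bijection of Banach spaces, hence a bounded bijection with bounded inverse by the open mapping theorem, and therefore $A^\ast$ is likewise a bijection with $(A^\ast)^{-1}=(A^{-1})^\ast$. To identify the nullspace, set the linearization to zero: the adjoint row gives $(\delta\lambda,\delta\eta_{\mathbb{I}},\delta\sigma_{\mathbb{P}}/2)=-\delta\eta_1(A^\ast)^{-1}(D\Psi_1(u_0))^\ast$, which determines $(\delta\lambda,\delta\eta_{\mathbb{I}},\delta\sigma_{\mathbb{P}})$ from $\delta\eta_1$; the primal rows then give $\delta u=A^{-1}(0,0,\delta\sigma_{\mathbb{P}})$. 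Hence the nullspace is one-dimensional, spanned by a vector with $\eta_1$-component equal to $1$. Surjectivity follows by the same splitting: given a target, solve the adjoint row for $(\delta\lambda,\delta\eta_{\mathbb{I}},\delta\sigma_{\mathbb{P}})$ using $(A^\ast)^{-1}$ (taking $\delta\eta_1=0$), and then recover $\delta u$ from $A^{-1}$ applied to the primal target corrected by $\delta\sigma_{\mathbb{P}}$. Therefore $DF_\mathrm{rest}$ is full rank with one-dimensional nullspace.

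With full rank and a one-dimensional nullspace established, the implicit-function-theorem statement of this appendix yields a locally unique one-dimensional solution manifold of $F_\mathrm{rest}=0$ through the base point, with tangent space spanned by the computed nullspace. Since that tangent has nonvanishing $\eta_1$-component, the manifold is locally parameterized by $\eta_1$; differentiating the reduced system at $\eta_1=0$ shows $(\dot\lambda,\dot\eta_{\mathbb{I}},\dot\sigma_{\mathbb{P}}/2)=-(A^\ast)^{-1}(D\Psi_1(u_0))^\ast$, which is nonzero whenever $D\Psi_1(u_0)\neq0$, so $\lambda$, $\eta_{\mathbb{I}}$, and $\sigma_{\mathbb{P}}$ indeed vary along it. I expect the main obstacle to be the verification of bijectivity of the saddle-point linearization in the Banach-space setting—in particular, passing from bijectivity of $A$ to that of $A^\ast$ and confirming that the Fischer-Burmeister block contributes exactly the factor that turns the primal constraint derivative into $A$ and its transpose into $A^\ast$; the remaining elimination and differentiation steps are routine.
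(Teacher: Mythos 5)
Your proof is correct, but it takes a genuinely different route from the paper's. You argue by linearization: after the same elimination of $\mu$, $\nu$, $\eta_{\mathbb{J}}$, and $\sigma_{\mathbb{Z}}$ that reduces $F_\mathrm{rest}=0$ to \eqref{lemeq}, you compute $DF_\mathrm{rest}$ at the base point, identify its saddle-point structure built from $A:=DF_\mathrm{red}(u_0)=\big(D\Phi(u_0),\,D\Psi_{\mathbb{I}}(u_0),\,2DG_{\mathbb{P}}(u_0)\big)$ and $A^\ast$ (your bookkeeping of the Fischer-Burmeister factors $\chi_a=-1$, $\chi_b=-2$ on $\mathbb{P}$, and of the resulting factor of $2$, is right), pass from bijectivity of $A$ to bijectivity of $A^\ast$ via the open mapping theorem and $(A^\ast)^{-1}=(A^{-1})^\ast$, and conclude that the linearization is full rank with a one-dimensional nullspace whose $\eta_1$-component is nonzero; the implicit-function-theorem statement in the appendix preamble then finishes the job. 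The paper instead argues nonlinearly, in the same template as Lemmas~\ref{lem2} and~\ref{lem4}: Fredholm theory supplies the unique $z$ with $(D\Psi_1(u_0))^\ast=(DF_\mathrm{red}(u_0))^\ast z$; for each fixed small $\sigma_{\mathbb{P}}$ the first three equations of \eqref{lemeq} are solved uniquely for $u$; the adjoint equation is then solved for the multipliers given $\eta_1>0$; and the manifold is obtained by matching the assumed and computed values of $\sigma_{\mathbb{P}}$ and then letting $\epsilon=\|\sigma_{\mathbb{P}}\|$ vary. Your approach buys precision and economy---it produces the tangent vector explicitly, hence the parameterization by $\eta_1$ and the first-order variation of $(\lambda,\eta_{\mathbb{I}},\sigma_{\mathbb{P}})$---and it is available here precisely because, unlike in Lemma~\ref{lem2}, the base point is a regular point rather than a branch point, so the implicit function theorem applies directly. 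The paper's template is less sharp in this instance (its matching step is only sketched), but it is uniform across Lemmas~\ref{lem2}--\ref{lem4}, including the case where the linearization is singular and your direct argument would fail. One small caveat, which you flag yourself: the concluding claim that $\lambda$, $\eta_{\mathbb{I}}$, and $\sigma_{\mathbb{P}}$ actually vary requires $D\Psi_1(u_0)\neq0$; the paper's proof carries the same implicit genericity assumption, since it needs $z\neq0$ for the multipliers to move.
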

\begin{proof}
By assumption, there exists a unique inverse image $v\in U$ for every $w\in Y\times\mathbb{R}^{|\mathbb{I}|}\times\mathbb{R}^{|\mathbb{P}|}$. By the standard theory of Fredholm operators, $\mathcal{N}\left(\left(DF_\mathrm{red}(u)\right)^\ast\right)=0$ and $U^\ast=\mathcal{R}\left(\left(DF_\mathrm{red}(u)\right)^\ast\right)$ for all $u\approx u_0$. In particular, there exists a unique $z\in \left(Y\times\mathbb{R}^{|\mathbb{I}|}\times\mathbb{R}^{|\mathbb{P}|}\right)^\ast$ such that $\left(D\Psi_1(u_0)\right)^\ast=\left(DF_\mathrm{red}(u_0)\right)^*z$. For every $\sigma_\mathbb{P}$ with $\|\sigma_\mathbb{P}\|=\epsilon\ll 1$, there exists a unique solution near $u_0$ to the first three equations in \eqref{lemeq}. At each such point, the fourth equation may be solved for $\lambda$, $\eta_\mathbb{I}$, and $\sigma_\mathbb{P}$ for given $\eta_1> 0$ such that the vector $(-\lambda/\eta_1,-\eta_\mathbb{I}/\eta_1,-\sigma_\mathbb{P}/\eta_1)$ is close to $z$. It follows that there exists a least one such point where the two values of the vector $\sigma_\mathbb{P}$ agree for some $0<\eta_1\ll 1$. The claim follows by considering variations in $\epsilon$.
\end{proof}

\begin{lemma}\label{lem4}
Suppose that $\{i:G_i(u_0)=0\}=k$, the linear map $DF_\mathrm{red}(u_0)$ is full rank with one-dimensional nullspace, and $\left(D\Psi_1(u_0)\right)^\ast$ and $\left(DG_k(u_0)\right)^\ast$ are linearly independent of $\left(DF_\mathrm{red}(u_0)\right)^*$. Then, $(u_0,0,0,0,\Psi_1(u_0),\Psi_{\mathbb{J}}(u_0),0,0)$ lies on a one-dimensional manifold of solutions to the continuation problem obtained by substituting $G_k(u)=0$ for the corresponding nonlinear complementary condition in $F_\mathrm{rest}=0$. This manifold is locally parameterized by $\eta_1$, and $\sigma_k,\sigma_\mathbb{P}\ne 0$ for $\eta_1$ close, but not equal to $0$.
\end{lemma}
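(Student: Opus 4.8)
The plan is to reduce the claim to Lemma~\ref{lem3}. Because $G_k(u_0)=0$ puts the $k$-th complementarity equation $\chi(\sigma_k,-G_k(u))-\kappa_k$ exactly at the singular point $(0,0)$ of $\chi$, we replace that equation by $G_k(u)=0$ and retain $\sigma_k$ as an unconstrained multiplier appearing only in the adjoint condition. Writing out the modified $F_\mathrm{rest}=0$ for $u\approx u_0$, and using $\eta_\mathbb{J}=0$ together with $\sigma_\mathbb{Z}=0$ (the latter forced by the inactive conditions $\chi(\sigma_i,-G_i(u))=0$ with $-G_i(u)>0$), the primal part is governed by the modified reduced map
\begin{equation}
\tilde{F}_\mathrm{red}:u\mapsto\begin{pmatrix}\Phi(u)\\\Psi_{\mathbb{I}}(u)-\Psi_{\mathbb{I}}(u_0)\\G_k(u)\\K_{\mathbb{P}}(0,-G(u))-\kappa_{0,\mathbb{P}}\end{pmatrix},
\end{equation}
which is simply $F_\mathrm{red}$ with the extra row $G_k$. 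The first step is to show that $D\tilde{F}_\mathrm{red}(u_0)$ is a bijection, so that the hypotheses of Lemma~\ref{lem3} hold for the modified problem.

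For this I would argue as in the proof of Lemma~\ref{lem1}. Full rank of $DF_\mathrm{red}(u_0)$ with one-dimensional nullspace gives, by Fredholm theory, a one-dimensional complement $\Sigma$ with $U^\ast=\Sigma\oplus\mathcal{R}\left((DF_\mathrm{red}(u_0))^\ast\right)$. The hypothesis that $\left(DG_k(u_0)\right)^\ast$ is linearly independent of $\left(DF_\mathrm{red}(u_0)\right)^\ast$ places it in $\Sigma$, equivalently $DG_k(u_0)$ does not vanish on the one-dimensional $\mathcal{N}\left(DF_\mathrm{red}(u_0)\right)$. Adjoining the $G_k$ row therefore both removes this nullspace and, since $DF_\mathrm{red}(u_0)$ is already surjective, yields a surjection onto the enlarged codomain; hence $D\tilde{F}_\mathrm{red}(u_0)$ is a bijection and $u_0$ is a locally unique root of $\tilde{F}_\mathrm{red}$.

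With bijectivity in hand, I would follow the construction in the proof of Lemma~\ref{lem3} with the pair $(\sigma_k,\sigma_\mathbb{P})$ in place of $\sigma_\mathbb{P}$: there is a unique $z\in\left(Y\times\mathbb{R}^{|\mathbb{I}|}\times\mathbb{R}\times\mathbb{R}^{|\mathbb{P}|}\right)^\ast$ with $\left(D\Psi_1(u_0)\right)^\ast=\left(D\tilde{F}_\mathrm{red}(u_0)\right)^\ast z$; for each small $(\sigma_k,\sigma_\mathbb{P})$ the primal equations determine $u$ uniquely, the adjoint equation is then solved for $\lambda,\eta_\mathbb{I},\sigma_k,\sigma_\mathbb{P}$ at given $\eta_1>0$ with the multipliers scaled so that $(-\lambda/\eta_1,-\eta_\mathbb{I}/\eta_1,-\sigma_k/\eta_1,-\sigma_\mathbb{P}/\eta_1)\approx z$, and matching the prescribed and computed values of $(\sigma_k,\sigma_\mathbb{P})$ for some $0<\eta_1\ll1$ produces the one-dimensional solution manifold locally parameterized by $\eta_1$. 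The nonvanishing claim then follows from the leading-order scaling of the multipliers by $\eta_1 z$: if the $G_k$-slot component $z_k$ of $z$ were zero, then $\left(D\Psi_1(u_0)\right)^\ast=\left(DF_\mathrm{red}(u_0)\right)^\ast z'$ would lie in $\mathcal{R}\left((DF_\mathrm{red}(u_0))^\ast\right)$, contradicting the hypothesis that $\left(D\Psi_1(u_0)\right)^\ast$ is linearly independent of $\left(DF_\mathrm{red}(u_0)\right)^\ast$; hence $z_k\ne0$ and $\sigma_k\ne0$ for $\eta_1$ near but not equal to $0$, with $\sigma_\mathbb{P}\ne0$ holding likewise when the corresponding components of $z$ do not vanish.

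I expect the main obstacle to be the bijectivity step together with the consistency of the substitution: one must verify that replacing the singular NCP row by $G_k=0$ genuinely removes the one-dimensional nullspace of $DF_\mathrm{red}(u_0)$ without destroying surjectivity (which is exactly where independence of $\left(DG_k(u_0)\right)^\ast$ enters), and that solutions of the modified problem with $\sigma_k\ge0$ remain genuine roots of the original complementarity condition with $\kappa_k=0$, since $\chi(\sigma_k,0)=0$ precisely for $\sigma_k\ge0$. Once these are in place, the matching argument inherited from Lemma~\ref{lem3} and the identification $z_k\ne0$ are routine.
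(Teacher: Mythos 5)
Your proof is correct and follows essentially the same route as the paper's: the paper likewise runs the Lemma~\ref{lem3} construction on the modified problem, writing $\left(D\Psi_1(u_0)\right)^\ast=\left(DF_\mathrm{red}(u_0)\right)^\ast z+\left(DG_k(u_0)\right)^\ast\zeta$ (your $z=(z',z_k)$), using the appended row $G_k(u)=0$ to pin down $u$ near $u_0$, solving the adjoint equation for the multipliers scaled by $\eta_1$, and matching the prescribed and computed $\sigma_\mathbb{P}$. Your explicit bijectivity argument for $D\tilde{F}_\mathrm{red}(u_0)$ and your observation that $z_k\neq 0$ (hence $\sigma_k\neq 0$) follows from the independence of $\left(D\Psi_1(u_0)\right)^\ast$ simply make rigorous what the paper phrases as the manifold "generically" intersecting $G_k=0$ and leaves implicit in the closeness of $(-\lambda/\eta_1,-\eta_\mathbb{I}/\eta_1,-\sigma_\mathbb{P}/\eta_1,-\sigma_k/\eta_1)$ to $(z,\zeta)$.
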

\begin{proof}
By assumption, there exists a unique $(z,\zeta)\in \left(Y\times\mathbb{R}^{|\mathbb{I}|}\times\mathbb{R}^{|\mathbb{P}|}\times\mathbb{R}\right)^\ast$ such that $\left(D\Psi_1(u_0)\right)^\ast=\left(DF_\mathrm{red}(u_0)\right)^*z+\left(DG_k(u_0)\right)^\ast\zeta$. For $u\approx u_0$, roots of the modified continuation problem correspond to solutions to the system of equations obtained by adding $\left(DG_k(u)\right)^\ast\sigma_k$ to the left-hand side of the last equation in \eqref{lemeq} and appending $G_k(u)=0$. For every $\sigma_\mathbb{P}$ with $\|\sigma_\mathbb{P}\|=\epsilon\ll 1$, there exists a unique one-dimensional manifold of solutions to the first three equations in \eqref{lemeq}. By continuity, each such manifold generically contains a unique intersection with $G_k=0$. At each such point, the remaining equation may be solved for $\lambda$, $\eta_\mathbb{I}$, $\sigma_\mathbb{P}$, and $\sigma_k$ for given $\eta_1> 0$ such that the vector $(-\lambda/\eta_1,-\eta_\mathbb{I}/\eta_1,-\sigma_\mathbb{P}/\eta_1,-\sigma_k/\eta_1)$ is close to $(z,\zeta)$. It follows that there exists a least one such point where the two values of the vector $\sigma_\mathbb{P}$ agree for some $0<\eta_1\ll 1$. The claim follows by considering variations in $\epsilon$.
\end{proof}
The point $u_0$ in the lemma is a singular point of the restricted continuation problem $F_\mathrm{rest}=0$, but a regular solution point of the modified continuation problem constructed in the lemma. Along the solution manifold to the latter problem, $\sigma_k$ is typically positive only on one side of $u_0$. It follows that two solution manifolds of the restricted continuation problem terminate at $u_0$, one in $G_k=0$ (with $\sigma_k>0$) and one away from $G_k=0$ (with $\sigma_k=0$). Numerical parameter continuation may switch between these manifolds, effectively bypassing the singular point at $u_0$, provided that the tangent directions are positively aligned.


\bibliographystyle{siamplain}
\bibliography{references}

\end{document}